   \numberwithin{equation}{section}
\newtheorem{thm}{Theorem}[section]
\newtheorem{lem}[thm]{Lemma}
\newtheorem{defn}[thm]{Definition}
\begin{document}
\begin{frontmatter}
\author{Hongfeng Li}
\ead{lihf728@nenu.edu.cn}
\author{Yong Wang\corref{cor2}}
\ead{wangy581@nenu.edu.cn}
\cortext[cor2]{Corresponding author.}

\address{School of Mathematics and Statistics, Northeast Normal University,
Changchun, 130024, China}

\title{ Sub-signature operators and the Dabrowski-Sitarz-Zalecki type theorems\\ for manifolds with boundary}
\begin{abstract}
In this paper, we define the spectral Einstein functional associated with the sub-signature operator for manifolds with boundary. Motivated by the spectral Einstein functional and the sub-signature operator, we relate them to the noncommutative residue for manifolds with boundary. And we give the proof of the Dabrowski-Sitarz-Zalecki type theorems for the spectral Einstein functional associated with the sub-signature operator on 4-dimensional manifolds with boundary.
\end{abstract}
\begin{keyword}
Sub-signature operators; spectral Einstein functional; the Dabrowski-Sitarz-Zalecki type theorems.
\end{keyword}
\end{frontmatter}
\section{Introduction}
\label{1}
An eminent spectral scheme that generates geometric objects on
manifolds such as residue, scalar curvature, and other scalar combinations of curvature tensors
is the small-time asymptotic expansion of the (localised) trace
of heat kernel\cite{PBG,FGV}. The theory has very rich structures both in physics and mathematics.
Recently, Dabrowski etc. \cite{DL} defined bilinear functionals of vector fields and differential forms,
the densities of which yield the  metric and spectral Einstein functionals on even-dimensional Riemannian manifolds,
and they obtained certain
values or residues of the (localised) zeta function of the Laplacian  arising from
the Mellin transform and the coefficients of this expansion.

Let $E$ be a finite-dimensional complex vector bundle over a closed compact manifold $M$
of dimension $n$, the noncommutative residue of a pseudo-differential operator
$P\in\Psi DO(E)$ can be defined by
 \begin{equation}
res(P):=(2\pi)^{-n}\int_{S^{*}M}\mathrm{Tr}(\sigma_{-n}^{P}(x,\xi))\mathrm{d}x \mathrm{d}\xi,
\end{equation}
where $S^{*}M\subset T^{*}M$ denotes the co-sphere bundle on $M$ and
$\sigma_{-n}^{P}$ is the component of order $-n$ of the complete symbol
 \begin{equation}
\sigma^{P}:=\sum_{i}\sigma_{i}^{P}
\end{equation}
of $P$, cf. \cite{Ac,Wo,Wo1,Gu},
and the linear functional $res: \Psi DO(E)\rightarrow \mathbb{C }$
is in fact the unique trace (up to multiplication
by constants) on the algebra of pseudo-differential operators $\Psi DO(E)$.
In \cite{Co1}, Connes  used the noncommutative residue to derive a conformal 4-dimensional
 Polyakov action analogy.
Connes  proved that the noncommutative residue on a compact manifold $M$ coincided with Dixmier's trace on pseudo-differential
operators of order -dim$M$ in \cite{Co2}.
And Connes claimed that the noncommutative residue of the square of the inverse of the Dirac operator was proportioned to the Einstein-Hilbert action. Kastler\cite{Ka} gave a brute-force proof of this theorem, and Kalau and Walze\cite{KW} proved
this theorem in the normal coordinates system simultaneously, which is called the Kastler-Kalau-Walze theorem now. Based on the theory of the noncommutative reside  introduced by Wodzicki, Fedosov etc.\cite{FGLS} constructed a noncommutative
residue on the algebra of classical elements in Boutet de Monvel's calculus on a compact manifold with boundary of dimension $n>2$.
With elliptic pseudo-differential operators and  noncommutative
residue, it's a natural way to study the Kastler-Kalau-Walze type theorem and
operator-theoretic explanation of the gravitational action for manifolds with boundary.\

On the other hand, Wang generalized the Connes' results to the case of manifolds with boundary in \cite{Wa1,Wa2},
and proved the Kastler-Kalau-Walze type theorem for the Dirac operator and the signature operator on lower-dimensional manifolds
with boundary \cite{Wa3}. In \cite{Wa3,Wa4}, Wang computed $\widetilde{{\rm Wres}}[\pi^+D^{-1}\circ\pi^+D^{-1}]$ and $\widetilde{{\rm Wres}}[\pi^+D^{-2}\circ\pi^+D^{-2}]$, where the two operators are symmetric, in these cases the boundary term vanished. But for $\widetilde{{\rm Wres}}[\pi^+D^{-1}\circ\pi^+D^{-3}]$, Wang got a nonvanishing boundary term \cite{WW1}, and gave a theoretical explanation for gravitational action on boundary. In other words, Wang provides a kind of method to study the Kastler-Kalau-Walze type theorem for manifolds with boundary. In \cite{WW2}, the authors got the spectral Einstein functional
 associated with Dirac operators with torsion on compact manifolds with  boundary. For lower dimensional compact Riemannian manifolds
  with  boundary, they computed the lower dimensional  residue of $\widetilde{\nabla}_{X}\widetilde{\nabla}_{Y}D_{T}^{-4}$ and
   get the Dabrowski-Sitarz-Zalecki theorems. Motivated by \cite{DL, WW2}, we define  the spectral Einstein functional associated with the sub-signature operator for manifolds with boundary, and the motivation of this paper is to compute the noncommutative residue $\widetilde{{\rm Wres}}[\pi^+(\nabla_{X}^{A}\nabla_{Y}^{A}D_{A}^{-2})\circ\pi^+(D_{A}^{-2})]$ and $\widetilde{{\rm Wres}}[\pi^+(\nabla_{X}^{A}\nabla_{Y}^{A}D_{A}^{-1})\circ\pi^+(D_{A}^{-3})]$ on 4-dimensional compact manifolds, where $D_{A}$ is the sub-signature operator.

The paper is organized in the following way. In Sec.\ref{section:2} and \ref{section:3}, we define the spectral Einstein functional associated with the sub-signature operator and get the noncommutative residue for manifolds without boundary. In Sec.\ref{section:4} and \ref{section:5}, we prove the Dabrowski-Sitarz-Zalecki type theorems for the spectral Einstein functional associated with the sub-signature operator on 4-dimensional manifolds with boundary.

\section{The spectral functional for the sub-signature operator}
\label{section:2}
\indent Firstly, we introduce some notations about the sub-signature operator. Let $M$ be an $n$-dimensional ($n\geq 3$) oriented compact Riemannian manifold with a Riemannian metric $g^{TM}$. And let $F$ be a subbundle of $TM$, $F^\bot$ be the subbundle of $TM$ orthogonal to $F$. Then we have the following orthogonal decomposition:
\begin{align}
&TM=F\bigoplus F^\bot;\nonumber\\
&g^{TM}=g^F\bigoplus g^{{F}^\bot},
\end{align}
where $g^F$ and $g^{{F}^\bot}$ are the induced metric on $F$ and $F^\bot$.\\
\indent Let $\nabla^L$ denote the Levi-Civita connection about $g^{TM}$. In the local coordinates $x_i;$ $1\leq i\leq n$ and the fixed orthonormal frame $\{e_1,\cdots,e_n\}$, the connection matrix $(\omega_{s,t})$ is defined by
\begin{align}
\nabla^L(e_1,\cdot\cdot\cdot,e_n)=(e_1,\cdot\cdot\cdot,e_n)(\omega_{s,t}).
\end{align}
\indent Let $\epsilon (e_j^*)$,~$\iota (e_j^*)$ be the exterior and interior multiplications respectively, where $e_j^*=g^{TM}(e_j,\cdot)$.
Write
\begin{equation}
\label{a3}
\widehat{c}(e_j)=\epsilon (e_j^* )+\iota
(e_j^*);~~
c(e_j)=\epsilon (e_j^* )-\iota (e_j^* ),
\end{equation}
which satisfies
\begin{align}
&\widehat{c}(e_i)\widehat{c}(e_j)+\widehat{c}(e_j)\widehat{c}(e_i)=2g^{TM}(e_i,e_j);\nonumber\\
&c(e_i)c(e_j)+c(e_j)c(e_i)=-2g^{TM}(e_i,e_j);\nonumber\\
&c(e_i)\widehat{c}(e_j)+\widehat{c}(e_j)c(e_i)=0.
\end{align}
By \cite{LW}, we have
\begin{align}
\label{a5}
\widetilde{D}&=d+\delta=\sum^n_{i=1}c(e_i)\bigg[e_i+\frac{1}{4}\sum_{s,t}\omega_{s,t}
(e_i)[\widehat{c}(e_s)\widehat{c}(e_t)
-c(e_s)c(e_t)]\bigg].
\end{align}
\indent Let $\pi^F$ (resp. $\pi^{F^\bot}$) be the orthogonal projection from $TM$ to $F$ (resp. $F^\bot$).
Set
\begin{align}
\label{a6}
&\nabla^F=\pi^F\nabla^{L}\pi^F;\nonumber\\
&\nabla^{F^\bot}=\pi^{F^\bot}\nabla^{L}\pi^{F^\bot},
\end{align}
then $\nabla^F$(resp. $\nabla^{F^\bot}$) is a Euclidean connection on $F$ (resp. ${F^\bot}$),  let $S$ be the tensor defined by
\begin{align}
\label{a7}
\nabla^{L}=\nabla^F+\nabla^{F^\bot}+S.
\end{align}
\indent Let $e_1,\cdot\cdot\cdot,e_n$ be the orthonormal basis of $TM$ and $f_1,\cdot\cdot\cdot,f_k$ be the orthonormal basis of $F^\bot$. The sub-signature operator $D_A$ acting on $\wedge^*T^*M\otimes\mathbb{C}$ is defined by
\begin{align}
\label{a8}
D_A&=d+\delta+\sum_{i=1}^{n}\sum_{\alpha=1}^{k}c(e_i)\widehat{c}(S(e_i)f_\alpha)\widehat{c}(f_\alpha)\nonumber\\
&=\sum^n_{i=1}c(e_i)\bigg[e_i+\frac{1}{4}\sum_{s,t}\omega_{s,t}
(e_i)[\widehat{c}(e_s)\widehat{c}(e_t)
-c(e_s)c(e_t)]\bigg]+\sum_{i=1}^{n}\sum_{\alpha=1}^{k}c(e_i)\widehat{c}(S(e_i)f_\alpha)\widehat{c}(f_\alpha),
\end{align}
where $c({e_{i}})$ denotes the Clifford action.

The following lemma of Dabrowski etc.'s Einstein functional plays a key role in our proof
of the Einstein functional.
Let $V$, $W$ be a pair of vector fields on a compact
Riemannian manifold $M$ of dimension $n = 2m$. Using the Laplace operator $\Delta^{-1}_{T}=D_{T}^{-2}=\Delta+E$
acting on sections of
 a vector bundle $E_0$ of rank $2^{m}$,
 the spectral functional over vector fields defined by
\begin{lem}\cite{DL}
The Einstein functional equals to
 \begin{equation}
Wres\big(\widetilde{\nabla}_{V}\widetilde{\nabla}_{W}\Delta^{-m}_{T}\big)=\frac{\upsilon_{n-1}}{6}2^{m}\int_{M}G(V,W)vol_{g}
 +\frac{\upsilon_{n-1}}{2}\int_{M}F(V,W)vol_{g}+\frac{1}{2}\int_{M}(\mathrm{tr}E)g(V,W)vol_{g},
\end{equation}
where $G(V,W)$ denotes the Einstein tensor evaluated on the two vector fields, $F(V,W)=tr(V_{a}W_{b}F_{ab})$ and
$F_{ab}$ is the curvature tensor of the connection $T$, $\mathrm{tr}E$ denotes the trace of $E$ and $\upsilon_{n-1}=\frac{2\pi^{m}}{\Gamma(m)}$.
\end{lem}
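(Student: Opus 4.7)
The plan is to work in geodesic normal coordinates centered at an arbitrary point $p\in M$ and to compute the homogeneous component of order $-n$ of the complete symbol of $\widetilde{\nabla}_{V}\widetilde{\nabla}_{W}\Delta_{T}^{-m}$, then integrate over the cosphere $S^{n-1}$ and over $M$. Three ingredients drive the computation: (i) the parametrix of $\Delta_{T}^{-m}$, derived from the Laplace-type symbol $p_{2}(\xi)=|\xi|^{2}$ together with sub-leading pieces encoding the connection $T$ and the endomorphism $E$; (ii) the symbol of $\widetilde{\nabla}_{V}\widetilde{\nabla}_{W}$, whose principal part in local coordinates is $-V^{a}W^{b}\xi_{a}\xi_{b}$, with sub-principal corrections involving $\partial V$, $\partial W$ and the connection one-form; and (iii) the composition rule $\sigma(PQ)\sim\sum_{\alpha}\tfrac{(-i)^{|\alpha|}}{\alpha!}\partial_{\xi}^{\alpha}\sigma(P)\,\partial_{x}^{\alpha}\sigma(Q)$.

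First I would expand $\Delta_{T}$ in normal coordinates, using $g_{ab}=\delta_{ab}-\tfrac{1}{3}R_{acbd}x^{c}x^{d}+O(|x|^{3})$, and produce the recursive formulas for $\sigma(\Delta_{T}^{-m})=q_{-2m}+q_{-2m-1}+q_{-2m-2}+\cdots$ with $q_{-2m}=|\xi|^{-2m}$. Since the product $\widetilde{\nabla}_{V}\widetilde{\nabla}_{W}$ raises the order by two, the term $\sigma_{-n}(\widetilde{\nabla}_{V}\widetilde{\nabla}_{W}\Delta_{T}^{-m})$ collects contributions from (a) the principal symbol of the covariant derivatives paired with $q_{-2m-2}$, (b) the sub-principal symbol paired with $q_{-2m-1}$, (c) the zero-order piece paired with $q_{-2m}$, and (d) the derivative corrections $\partial_{\xi}^{\alpha}\sigma(\widetilde{\nabla}_{V}\widetilde{\nabla}_{W})\,\partial_{x}^{\alpha}q_{\ast}$ with $|\alpha|=1,2$.

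Next I would integrate the resulting polynomial in $\xi$ over $|\xi|=1$ using the moment identities $\int_{S^{n-1}}\xi_{i}\xi_{j}\,d\xi=\tfrac{\upsilon_{n-1}}{n}\delta_{ij}$ and $\int_{S^{n-1}}\xi_{i}\xi_{j}\xi_{k}\xi_{l}\,d\xi=\tfrac{\upsilon_{n-1}}{n(n+2)}(\delta_{ij}\delta_{kl}+\delta_{ik}\delta_{jl}+\delta_{il}\delta_{jk})$. The $q_{-2m-2}$ contribution, together with the normal-coordinate curvature expansion, produces the familiar Seeley--DeWitt combination $\tfrac{1}{6}R-E$ traced against $V^{a}W^{b}$ and $g^{ab}$; the $q_{-2m-1}$ contribution encodes the commutator $[\widetilde{\nabla}_{a},\widetilde{\nabla}_{b}]=F_{ab}$, producing the antisymmetric curvature term $F(V,W)$; and the zero-order contribution together with the rank $2^{m}$ fibre trace produces the $(\mathrm{tr}\,E)g(V,W)$ term.

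Finally I would reassemble the scalar-curvature contractions into the Einstein combination $G(V,W)=\mathrm{Ric}(V,W)-\tfrac{1}{2}R\,g(V,W)$ with coefficient $\tfrac{\upsilon_{n-1}}{6}2^{m}$, the curvature piece with coefficient $\tfrac{\upsilon_{n-1}}{2}$, and the endomorphism piece with coefficient $\tfrac{1}{2}$. The main obstacle is the algebraic bookkeeping in step three: one must check that the several $V^{a}W^{b}$-contractions of Riemann components arising from $q_{-2m-2}$, from the $\partial_{x}$-corrections in the composition formula, and from the expansion of $g^{ab}$, combine with the sphere-integral combinatorial factors $\tfrac{1}{n(n+2)}$ so as to assemble precisely into $\mathrm{Ric}(V,W)-\tfrac{1}{2}R\,g(V,W)$ (rather than an unrelated curvature contraction), and that the cross-terms mixing $T$, $F$ and $E$ cancel or recombine into the three clean pieces on the right-hand side.
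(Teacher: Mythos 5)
This lemma is quoted verbatim from \cite{DL} and the paper supplies no proof of its own, so there is no internal argument to compare against; your outline is the standard Wodzicki-residue density computation (geodesic normal coordinates, parametrix of $\Delta_{T}^{-m}$, the composition formula, and the second and fourth moment identities on $S^{n-1}$), which is essentially the route taken in the cited source. The structure is sound, but as you yourself note the real content lies in the bookkeeping you defer: one must verify that the fourth-moment combinatorial factor $\tfrac{1}{n(n+2)}$ combined with the normal-coordinate expansion of $g^{ab}$ and the $\partial_{x}$-corrections really assembles into $\mathrm{Ric}(V,W)-\tfrac{1}{2}sg(V,W)$ with the stated coefficient $\tfrac{\upsilon_{n-1}}{6}2^{m}$, and that the zero-order and commutator contributions separate cleanly into the $F(V,W)$ and $(\mathrm{tr}E)g(V,W)$ pieces, so the proposal should be regarded as a correct plan rather than a complete proof.
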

The aim of this section is to prove the following.
\begin{thm}
For the Laplace (type) operator $\Delta_{A}=D_A^2$, the Einstein functional equals to
\begin{align}\label{c1}
Wres\big({\nabla}_{X}^{A}{\nabla}_{Y}^{A}\Delta^{-m}_{A}\big)
=&\frac{2^{m+1}\pi^{m}}{6\Gamma(m)}\int_{M}\big(Ric(V,W)-\frac{1}{2}sg(V,W)\big) vol_{g}\nonumber\\
&-\int_{M}2^{2m-3}sg(V,W) vol_{g},
\end{align}
where $s$ is the scalar curvature, $A=\sum_{i=1}^{n}\sum_{\alpha=1}^{k}c(e_i)\widehat{c}(S(e_i)f_\alpha)\widehat{c}(f_\alpha)$ and $\nabla_{X}^{A}=\nabla_X^{\wedge^\ast T^\ast M}-\frac{1}{2}[c(X)A+Ac(X)]$.
\end{thm}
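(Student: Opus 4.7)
The plan is to reduce (\ref{c1}) to Lemma 2.1 by writing $\Delta_{A}=D_{A}^{2}$ as a generalized Laplacian of the form $\Delta^{\nabla^{A}}+E$ for the twisted connection $\nabla^{A}$ appearing in the theorem, and then evaluating the three terms of the lemma on this presentation.

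First, I would square $D_{A}=\widetilde{D}+A$ and use the classical Weitzenb\"ock formula for $\widetilde{D}=d+\delta$,
\[
\widetilde{D}^{2}=-g^{ij}\bigl(\nabla^{\wedge^{*}T^{*}M}_{i}\nabla^{\wedge^{*}T^{*}M}_{j}-\nabla^{\wedge^{*}T^{*}M}_{\nabla^{L}_{i}e_{j}}\bigr)+\mathcal{R},
\]
where $\mathcal{R}$ is the usual Weitzenb\"ock curvature on $\wedge^{*}T^{*}M\otimes\mathbb{C}$. Expanding $D_{A}^{2}=\widetilde{D}^{2}+\widetilde{D}A+A\widetilde{D}+A^{2}$, the cross-term $\widetilde{D}A+A\widetilde{D}$ is precisely what is needed to absorb the linear correction $-\tfrac12[c(X)A+Ac(X)]$ in $\nabla^{A}_{X}$ into a genuine Bochner Laplacian. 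After rearrangement one obtains
\[
\Delta_{A}=-g^{ij}\bigl(\nabla^{A}_{i}\nabla^{A}_{j}-\nabla^{A}_{\nabla^{L}_{i}e_{j}}\bigr)+E,
\]
with $E$ an explicit endomorphism built out of $\mathcal{R}$, $A^{2}$, and the quadratic piece produced by completing the connection square.

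Applying Lemma 2.1 with $T=\nabla^{A}$ then yields three contributions. The Einstein-tensor piece $\frac{\upsilon_{n-1}}{6}\,2^{m}\int G(X,Y)\,vol_{g}$, with $\upsilon_{n-1}=2\pi^{m}/\Gamma(m)$, reproduces the first line of (\ref{c1}) directly after substituting $G(X,Y)=Ric(X,Y)-\tfrac12 sg(X,Y)$. It then remains to show that the curvature contribution $\frac{\upsilon_{n-1}}{2}\int F(X,Y)\,vol_{g}$ together with $\frac12\int \mathrm{tr}(E)\,g(X,Y)\,vol_{g}$ collapses to exactly $-\int 2^{2m-3}\,sg(X,Y)\,vol_{g}$. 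For $\mathrm{tr}(E)$, the $\mathcal{R}$-piece contributes the standard scalar curvature multiple on $\wedge^{*}T^{*}M\otimes\mathbb{C}$, while the $A^{2}$ and cross contributions are evaluated using $\widehat{c}(f_{\alpha})^{2}=1$ and the vanishing of traces of words containing an odd number of $c$'s or $\widehat{c}$'s; for $F(X,Y)$, the $A$-dependent correction to the curvature of $\nabla^{A}$ reduces under the same Clifford-trace identities, with the orthogonality $F\perp F^{\bot}$ ensuring that no tensorial invariant beyond scalar curvature survives.

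The hard part will be the sign and coefficient bookkeeping in these trace computations, since $A$ couples the $c$ and $\widehat{c}$ generators whose Clifford relations have opposite signs and since the cross terms between $\mathcal{R}$ and $A^{2}$ can produce spurious Ricci-type contractions that must be shown to cancel. Checking that the $A$-dependent pieces assemble into exactly the coefficient $-2^{2m-3}$ in front of $sg(X,Y)$, rather than a more general curvature scalar, is the delicate step.
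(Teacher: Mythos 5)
Your plan follows essentially the same route as the paper: present $\Delta_{A}=D_{A}^{2}$ as a Bochner Laplacian for the twisted connection $\nabla^{A}$ plus an endomorphism $E$, apply Lemma 2.1 with $T=\nabla^{A}$, and reduce the second line of the asserted equality to Clifford-trace bookkeeping. The paper settles your ``delicate step'' by checking that every term of $\mathrm{tr}(F_{e_{a},e_{b}})$ vanishes (so the $F(V,W)$ contribution is identically zero) and that only the $-\frac{1}{4}s$ piece of $E$ survives the trace, giving $\mathrm{tr}(E)=-2^{2m-2}s$ and hence the coefficient $-2^{2m-3}$.
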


\begin{proof}
By (2.12) in \cite{WW2}, for any $\psi\in\Gamma(M,\wedge^*T^*M\otimes\mathbb{C})$, we let
 \begin{align}
{\nabla}_{X}^{A}\psi=&\nabla_{X}^{\wedge^\ast T^\ast M}\psi-\frac{1}{2}[c(X)A+Ac(X)]\psi\nonumber\\
=&X\psi+\sigma(X)\psi+a(X)\psi-\frac{1}{2}[c(X)A+Ac(X)]\psi\nonumber\\
=&X\psi+\overline{B}(X)\psi,
\end{align}
where
 \begin{equation}
\sigma(X)=-\frac{1}{4}\sum_{s,t} \omega_{s,t}(X) c(e_s)c(e_t),~~ a(X)=\frac{1}{4}\sum_{s,t} \omega_{s,t}(X) \widehat{c}(e_s)\widehat{c}(e_t).
\end{equation}
Let $V=\sum_{a=1}^{n}V^{a}e_{a}$, $W=\sum_{b=1}^{n}W^{b}e_{b}$, in view of that
 \begin{equation}
F(V,W)=tr(V_{a}W_{b}F_{ab})=\sum_{a,b=1}^{n}V^{a}W^{b}tr^{\wedge^\ast T^\ast M}(F_{e_{a},e_{b}}),
\end{equation}
we obtain
 \begin{align}
F_{e_{a},e_{b}}=&(e_{a}+\overline{B}(e_{a}))(e_{b}+\overline{B}(e_{b}))-(e_{b}+\overline{B}(e_{b}))(e_{a}+\overline{B}(e_{a}))
-([e_{a},e_{b}]+\overline{B}([e_{a},e_{b}]))
\nonumber\\
=&e_{a}\circ \overline{B}(e_{b})+\overline{B}(e_{a})\circ e_{b}+\overline{B}(e_{a})B(e_{b})-e_{b}\circ \overline{B}(e_{a})
-\overline{B}(e_{b})\circ e_{a}\nonumber\\
& -\overline{B}(e_{b})\overline{B}(e_{a})-\overline{B}([e_{a},e_{b}])\nonumber\\
=&\overline{B}(e_{b})\circ e_{a}+e_{a}(\overline{B}(e_{b}))+\overline{B}(e_{a})\circ e_{b}+\overline{B}(e_{a})\overline{B}(e_{b})
-\overline{B}(e_{a})\circ e_{b}-e_{b}(\overline{B}(e_{a}))\nonumber\\
& -\overline{B}(e_{b})\circ e_{a}-\overline{B}(e_{b})\overline{B}(e_{a})-\overline{B}([e_{a},e_{b}])\nonumber\\
=&e_{a}(\overline{B}(e_{b}))-e_{b}(\overline{B}(e_{a}))+\overline{B}(e_{a})\overline{B}(e_{b})
-\overline{B}(e_{b})\overline{B}(e_{a})-\overline{B}([e_{a},e_{b}]).
\end{align}
Also, straightforward computations yield
 \begin{align}\label{c2}
\mathrm{tr}^{\wedge^\ast T^\ast M}\big(e_{a}(\overline{B}(e_{b}))\big)
=&\mathrm{tr}^{\wedge^\ast T^\ast M}\Big[e_{a}\Big(-\frac{1}{4}\sum_{s,t} \omega_{s,t}(e_{b}) c(e_s)c(e_t)
+\frac{1}{4}\sum_{s,t} \omega_{s,t}(e_{b}) \widehat{c}(e_s)\widehat{c}(e_t)\nonumber\\
&-\frac{1}{2}[c(e_{b})A+Ac(e_{b})]\Big)\Big]\nonumber\\
=&\mathrm{tr}^{\wedge^\ast T^\ast M}\Big[-\frac{1}{4}\sum_{s,t} e_{a}(\omega_{s,t}(e_{b})) c(e_s)c(e_t)
+\frac{1}{4}\sum_{s,t} e_{a}(\omega_{s,t}(e_{b})) \widehat{c}(e_s)\widehat{c}(e_t)\Big]\nonumber\\
=&0,
\end{align}
where
 \begin{equation}
\omega_{s,t}(e_{b})=0~(s=t);~~\mathrm{tr}[c(e_s)c(e_t)]=\mathrm{tr}[\widehat{c}(e_s)\widehat{c}(e_t)]=0~(s\neq t);~~\mathrm{tr}[c(e_{b})A]=0,
\end{equation}
where we take the normal coordinate about $x_0$, it follows that
\begin{align}
&\mathrm{tr}^{\wedge^\ast T^\ast M}\big(\overline{B}(e_{a})\overline{B}(e_{b})-\overline{B}(e_{b})\overline{B}(e_{a})\big)(x_0)\nonumber\\
=&\mathrm{tr}^{\wedge^\ast T^\ast M}\Big[ \Big(-\frac{1}{4}\sum_{s,t} \omega_{s,t}(e_{a}) c(e_s)c(e_t)
+\frac{1}{4}\sum_{s,t} \omega_{s,t}(e_{a}) \widehat{c}(e_s)\widehat{c}(e_t)-\frac{1}{2}[c(e_{a})A+Ac(e_{a})]\Big)\nonumber\\
&\times \Big(-\frac{1}{4}\sum_{s,t} \omega_{s,t}(e_{b}) c(e_s)c(e_t)
+\frac{1}{4}\sum_{s,t} \omega_{s,t}(e_{b}) \widehat{c}(e_s)\widehat{c}(e_t)-\frac{1}{2}[c(e_{b})A+Ac(e_{b})]\Big)\Big](x_0)\nonumber\\
&-\mathrm{tr}^{\wedge^\ast T^\ast M}\Big[\Big(-\frac{1}{4}\sum_{s,t} \omega_{s,t}(e_{b}) c(e_s)c(e_t)
+\frac{1}{4}\sum_{s,t} \omega_{s,t}(e_{b}) \widehat{c}(e_s)\widehat{c}(e_t)-\frac{1}{2}[c(e_{b})A+Ac(e_{b})]\Big)\nonumber\\
&\times \Big(-\frac{1}{4}\sum_{s,t} \omega_{s,t}(e_{a}) c(e_s)c(e_t)
+\frac{1}{4}\sum_{s,t} \omega_{s,t}(e_{a}) \widehat{c}(e_s)\widehat{c}(e_t)-\frac{1}{2}[c(e_{a})A+Ac(e_{a})]\Big)\Big](x_0)\nonumber\\
=&0,
\end{align}
and
\begin{align}
&~~~~\mathrm{tr}^{\wedge^\ast T^\ast M}\big(\overline{B}([e_{a},e_{b}])\big)(x_0)\nonumber\\
=&\mathrm{tr}^{\wedge^\ast T^\ast M} \Big(\sigma([e_{a},e_{b}])+a([e_{a},e_{b}])
-\frac{1}{2}[c([e_{a},e_{b}])A+Ac([e_{a},e_{b}])\Big)(x_0)\nonumber\\
=&0.
\end{align}
Let $\Delta_{A}=\Delta+E$. By (2.17) in \cite{WWW}, we have
 \begin{align}
E=&\frac{1}{8}\sum_{ijkl}R_{ijkl}\widehat{c}(e_i)\widehat{c}(e_j)c(e_k)c(e_l)-\frac{1}{4}s-A^2-\frac{1}{4}\sum_{j}[c(e_j)A+Ac(e_j)]^2\nonumber\\
&+\frac{1}{2}[({\nabla}_{e_{j}}^{\wedge^\ast T^\ast M}A)c(e_j)-c(e_j)({\nabla}_{e_{j}}^{\wedge^\ast T^\ast M}A)],
\end{align}
and
\begin{align}\label{c3}
\mathrm{tr}^{\wedge^\ast T^\ast M}(E)=-\frac{1}{4}s\mathrm{tr}[id]=-2^{2m-2}s.
\end{align}
Summing up (\ref{c2})-(\ref{c3}) leads to the desired equality (\ref{c1}), and the proof of
the Theorem is complete.
\end{proof}

\section{The noncommutative residue for manifolds with boundary}
\label{section:3}
In this section, to define the noncommutative residue for the sub-signature operator,
 some basic facts and formulae about Boutet de Monvel's calculus can be found in Sec.2 in \cite{Wa1}.
Let $M$ be an n-dimensional compact oriented manifold with boundary $\partial M$.
Some basic facts and formulae about Boutet de Monvel's calculus are recalled as follows.

Let $$ F:L^2({\bf R}_t)\rightarrow L^2({\bf R}_v);~F(u)(v)=\int e^{-ivt}u(t)\texttt{d}t$$ denote the Fourier transformation and
$\varphi(\overline{{\bf R}^+}) =r^+\varphi({\bf R})$ (similarly define $\varphi(\overline{{\bf R}^-}$)), where $\varphi({\bf R})$
denotes the Schwartz space and
  \begin{equation}
r^{+}:C^\infty ({\bf R})\rightarrow C^\infty (\overline{{\bf R}^+});~ f\rightarrow f|\overline{{\bf R}^+};~
 \overline{{\bf R}^+}=\{x\geq0;x\in {\bf R}\}.
\end{equation}

We define $H^+=F(\varphi(\overline{{\bf R}^+}));~ H^-_0=F(\varphi(\overline{{\bf R}^-}))$ which are orthogonal to each other. We have the following
 property: $h\in H^+~(H^-_0)$ iff $h\in C^\infty({\bf R})$ which has an analytic extension to the lower (upper) complex
half-plane $\{{\rm Im}\xi<0\}~(\{{\rm Im}\xi>0\})$ such that for all nonnegative integer $l$,
 \begin{equation}
\frac{\texttt{d}^{l}h}{\texttt{d}\xi^l}(\xi)\sim\sum^{\infty}_{k=1}\frac{\texttt{d}^l}{\texttt{d}\xi^l}(\frac{c_k}{\xi^k})
\end{equation}
as $|\xi|\rightarrow +\infty,{\rm Im}\xi\leq0~({\rm Im}\xi\geq0)$.

 Let $H'$ be the space of all polynomials and $H^-=H^-_0\bigoplus H';~H=H^+\bigoplus H^-.$ Denote by $\pi^+~(\pi^-)$ respectively the
 projection on $H^+~(H^-)$. For calculations, we take $H=\widetilde{H}=\{$rational functions having no poles on the real axis$\}$ ($\widetilde{H}$
 is a dense set in the topology of $H$). Then on $\widetilde{H}$,
 \begin{equation}
\pi^+h(\xi_0)=\frac{1}{2\pi i}\lim_{u\rightarrow 0^{-}}\int_{\Gamma^+}\frac{h(\xi)}{\xi_0+iu-\xi}\texttt{d}\xi,
\end{equation}
where $\Gamma^+$ is a Jordan close curve included ${\rm Im}\xi>0$ surrounding all the singularities of $h$ in the upper half-plane and
$\xi_0\in {\bf R}$. Similarly, define $\pi^{'}$ on $\widetilde{H}$,
 \begin{equation}
\pi'h=\frac{1}{2\pi}\int_{\Gamma^+}h(\xi)\texttt{d}\xi.
\end{equation}

So, $\pi'(H^-)=0$. For $h\in H\bigcap L^1(R)$, $\pi'h=\frac{1}{2\pi}\int_{R}h(v)\texttt{d}v$ and for $h\in H^+\bigcap L^1(R)$, $\pi'h=0$.
Denote by $\mathcal{B}$ Boutet de Monvel's algebra (for details, see Section 2 of \cite{Wa1}).

An operator of order $m\in {\bf Z}$ and type $d$ is a matrix
$$\widetilde{A}=\left(\begin{array}{lcr}
  \pi^+P+G  & K  \\
   T  &  S
\end{array}\right):
\begin{array}{cc}
\   C^{\infty}(X,E_1)\\
 \   \bigoplus\\
 \   C^{\infty}(\partial{X},F_1)
\end{array}
\longrightarrow
\begin{array}{cc}
\   C^{\infty}(X,E_2)\\
\   \bigoplus\\
 \   C^{\infty}(\partial{X},F_2)
\end{array}.
$$
where $X$ is a manifold with boundary $\partial X$ and
$E_1,E_2~(F_1,F_2)$ are vector bundles over $X~(\partial X
)$.~Here,~$P:C^{\infty}_0(\Omega,\overline {E_1})\rightarrow
C^{\infty}(\Omega,\overline {E_2})$ is a classical
pseudo-differential operator of order $m$ on $\Omega$, where
$\Omega$ is an open neighborhood of $X$ and
$\overline{E_i}|X=E_i~(i=1,2)$. $P$ has an extension:
$~{\cal{E'}}(\Omega,\overline {E_1})\rightarrow
{\cal{D'}}(\Omega,\overline {E_2})$, where
${\cal{E'}}(\Omega,\overline {E_1})~({\cal{D'}}(\Omega,\overline
{E_2}))$ is the dual space of $C^{\infty}(\Omega,\overline
{E_1})~(C^{\infty}_0(\Omega,\overline {E_2}))$. Let
$e^+:C^{\infty}(X,{E_1})\rightarrow{\cal{E'}}(\Omega,\overline
{E_1})$ denote extension by zero from $X$ to $\Omega$ and
$r^+:{\cal{D'}}(\Omega,\overline{E_2})\rightarrow
{\cal{D'}}(\Omega, {E_2})$ denote the restriction from $\Omega$ to
$X$, then define
$$\pi^+P=r^+Pe^+:C^{\infty}(X,{E_1})\rightarrow {\cal{D'}}(\Omega,
{E_2}).$$

In addition, $P$ is supposed to have the
transmission property; this means that, for all $j,k,\alpha$, the
homogeneous component $p_j$ of order $j$ in the asymptotic
expansion of the
symbol $p$ of $P$ in local coordinates near the boundary satisfies:
$$\partial^k_{x_n}\partial^\alpha_{\xi'}p_j(x',0,0,+1)=
(-1)^{j-|\alpha|}\partial^k_{x_n}\partial^\alpha_{\xi'}p_j(x',0,0,-1),$$
then $\pi^+P:C^{\infty}(X,{E_1})\rightarrow C^{\infty}(X,{E_2})$
by Section 2.1 of \cite{Wa1}.

Let $M$ be a compact manifold with boundary $\partial M$. We assume that the metric $g^{M}$ on $M$ has
the following form near the boundary
 \begin{equation}
 g^{M}=\frac{1}{h(x_{n})}g^{\partial M}+\texttt{d}x _{n}^{2} ,
\end{equation}
where $g^{\partial M}$ is the metric on $\partial M$. Let $U\subset
M$ be a collar neighborhood of $\partial M$ which is diffeomorphic $\partial M\times [0,1)$. By the definition of $h(x_n)\in C^{\infty}([0,1))$
and $h(x_n)>0$, there exists $\widetilde{h}\in C^{\infty}((-\varepsilon,1))$ such that $\widetilde{h}|_{[0,1)}=h$ and $\widetilde{h}>0$ for some
sufficiently small $\varepsilon>0$. Then there exists a metric $\widehat{g}$ on $\widehat{M}=M\bigcup_{\partial M}\partial M\times
(-\varepsilon,0]$ which has the form on $U\bigcup_{\partial M}\partial M\times (-\varepsilon,0 ]$
 \begin{equation}
\widehat{g}=\frac{1}{\widetilde{h}(x_{n})}g^{\partial M}+\texttt{d}x _{n}^{2} ,
\end{equation}
such that $\widehat{g}|_{M}=g$.
We fix a metric $\widehat{g}$ on the $\widehat{M}$ such that $\widehat{g}|_{M}=g$.
Now we recall the main theorem in \cite{FGLS}.
\begin{thm}\label{th:32}{\bf(Fedosov-Golse-Leichtnam-Schrohe)}
 Let $X$ and $\partial X$ be connected, ${\rm dim}X=n\geq3$,
 $\widetilde{A}=\left(\begin{array}{lcr}\pi^+P+G &   K \\
T &  S    \end{array}\right)$ $\in \mathcal{B}$ , and denote by $p$, $b$ and $s$ the local symbols of $P,G$ and $S$ respectively.
 Define:
 \begin{align}
{\rm{\widetilde{Wres}}}(\widetilde{A})=&\int_X\int_{\bf S}{\mathrm{tr}}_E\left[p_{-n}(x,\xi)\right]\sigma(\xi)dx \nonumber\\
&+2\pi\int_ {\partial X}\int_{\bf S'}\left\{{\mathrm{tr}}_E\left[({\mathrm{tr}}b_{-n})(x',\xi')\right]+{\mathrm{tr}}
_F\left[s_{1-n}(x',\xi')\right]\right\}\sigma(\xi')dx'.
\end{align}
Then~~ a) ${\rm \widetilde{Wres}}([\widetilde{A},B])=0 $, for any
$\widetilde{A},B\in\mathcal{B}$;~~ b) It is a unique continuous trace on
$\mathcal{B}/\mathcal{B}^{-\infty}$.
\end{thm}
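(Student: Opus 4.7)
The plan is to follow the Fedosov-Golse-Leichtnam-Schrohe strategy and prove the two assertions separately, exploiting the $2\times 2$ matrix structure of $\mathcal{B}$ with entries $\pi^+P+G$, $K$, $T$, $S$. The calculational backbone is the symbol calculus of Boutet de Monvel's algebra: the composition $\pi^+P\cdot\pi^+Q$ differs from $\pi^+(PQ)$ by a singular Green operator $L(P,Q)$ whose symbol is given by a Leibniz-type formula in the $H^\pm$ decomposition of the partial symbols in $\xi_n$, and the analogous composition rules for $GG'$, $KT$, $TK$, $SS'$, $PK$, $TP$ are all standard.

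For part (a), I would compute $[\widetilde{A},B]$ entry by entry and take the trace of its local symbol. The interior $(1,1)$-density contains $\sigma_{-n}^{[P,Q]}$, which contributes zero to the first summand in the definition of $\widetilde{{\rm Wres}}$ because the classical Wodzicki residue is already a trace on $\Psi DO(\widehat{X})$. The remaining parts of the $(1,1)$-entry (the extra Green pieces coming from truncated compositions $L(P,Q)$), together with the pure $G$, $K$, $T$, $S$ contributions, produce boundary integrands of the form ${\rm tr}_E({\rm tr}\,b_{-n})(x',\xi')$ and ${\rm tr}_F(s_{1-n})(x',\xi')$. Using antisymmetry of $L(P,Q)$ under the swap $P\leftrightarrow Q$, repeated contour integration in $\xi_n$ over $\Gamma^+$ (converting $\pi^+$ via Cauchy's theorem), and the identity $\pi^++\pi^-={\rm id}$ on rational symbols, I expect each summand to cancel pointwise in $x'$, up to total derivatives in $\xi'$ that integrate to zero on $S^{*}\partial X$.

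For part (b), let $\tau$ be any continuous trace on $\mathcal{B}/\mathcal{B}^{-\infty}$. Restricting $\tau$ to classical pseudodifferential operators compactly supported in the interior of $X$ and invoking Wodzicki's classical uniqueness theorem, this restriction must be a scalar multiple of the ordinary noncommutative residue; normalising this scalar fixes the interior coefficient of $\widetilde{{\rm Wres}}$. For the boundary component I would show, via a cohomological argument, that the quotient of singular Green, trace, potential and boundary $\Psi DO$ symbols by those lying in commutators is spanned modulo smoothing operators by the densities ${\rm tr}\,b_{-n}$ and $s_{1-n}$. Continuity of $\tau$ together with density of finite-rank approximations then forces agreement with the second summand of $\widetilde{{\rm Wres}}$ with the same overall constant.

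The main technical obstacle is the symbol-level bookkeeping in part (a): one must identify $L(P,Q)$ precisely enough for its contribution to cancel the intrinsic $G$-components arising from $[\widetilde{A},B]$, and then verify that the resulting $H^\pm$-contour integrals really vanish after tracing. Careful use of Cauchy's theorem on $\Gamma^+$ and the Leibniz expansion in the tangential variables $(x',\xi')$ will be the delicate step, and is precisely where the boundary analogue of Wodzicki's argument requires the transmission property built into $\mathcal{B}$.
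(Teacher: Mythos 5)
The paper never proves this theorem: it is quoted verbatim as background (``Now we recall the main theorem in \cite{FGLS}'') and its proof lives entirely in the cited work of Fedosov--Golse--Leichtnam--Schrohe. So there is no in-paper argument to compare against; what can be assessed is whether your sketch would stand on its own as a proof, and in its present form it would not. The architecture you describe is the right one --- entrywise computation of $[\widetilde{A},B]$, the leftover singular Green term $L(P,Q)$ from $\pi^+P\circ\pi^+Q-\pi^+(PQ)$, Wodzicki's interior trace property for the $p_{-n}$ term, and uniqueness anchored on Wodzicki's classical theorem --- but the two load-bearing steps are asserted rather than argued. In part (a), ``I expect each summand to cancel'' is precisely the theorem; note also that $L(P,Q)$ itself is not antisymmetric in $P\leftrightarrow Q$ --- what enters the $(1,1)$ entry of the commutator is the difference $L(P,Q)-L(Q,P)$, and its boundary residue density does not vanish pointwise but only after the $\xi_n$-contour integration and the $(x',\xi')$ integration are combined with the ${\rm tr}\,b_{-n}$ normal-trace convention; writing out that computation (with the transmission property controlling the $H^\pm$ splitting) is the actual content of the FGLS proof and cannot be replaced by ``up to total derivatives that integrate to zero.''

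In part (b) the gap is larger. Wodzicki's uniqueness theorem pins down the restriction of a trace $\tau$ to interior-supported $\Psi$DOs, but it does not by itself determine $\tau$ on the Green, potential, trace, and boundary components, nor does it fix the \emph{relative} constant between your interior and boundary terms (the factor $2\pi$ in the statement). For that you must show two things you currently wave at with ``a cohomological argument'': first, that every element of the boundary part of $\mathcal{B}/\mathcal{B}^{-\infty}$ whose density ${\rm tr}_E({\rm tr}\,b_{-n})+{\rm tr}_F(s_{1-n})$ integrates to zero is a finite sum of commutators in $\mathcal{B}$ modulo smoothing operators (this is the boundary analogue of the ``residue zero implies sum of commutators'' lemma, and it must be proved, e.g.\ by exhibiting explicit commutators with $x'_j$, $D_{x'_j}$, and suitable Green factors); and second, that commutators genuinely \emph{mix} the interior and boundary components --- e.g.\ $[\pi^+P,\pi^+Q]$ produces the singular Green term $L(P,Q)-L(Q,P)$ --- so that the normalization chosen in the interior propagates to the boundary term and forces the $2\pi$. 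Without these two steps the uniqueness claim is circular: a priori $\tau$ could agree with $\widetilde{{\rm Wres}}$ in the interior and differ by an arbitrary continuous trace supported on the boundary calculus. Hypotheses such as connectedness of $X$ and $\partial X$ and $n\geq 3$, which appear in the statement exactly because they are needed for these commutator-generation arguments, make no appearance in your sketch --- a sign that the decisive mechanism is missing.
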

Let $p_{1},p_{2}$ be nonnegative integers and $p_{1}+p_{2}\leq n$,
 denote by $\sigma_{l}(\widetilde{A})$ the $l$-order symbol of an operator $\widetilde{A}$,
 an application of (3.5) and (3.6) in \cite{Wa1} shows that
\begin{defn} The spectral Einstein functional of compact manifolds with boundary is defined by
   \begin{equation}\label{}
   Ein_{n}^{\{p_{1},p_{2}\}}M:=\widetilde{Wres}[\pi^{+}(\nabla_{X}^{A}\nabla_{Y}^{A}(D_{A}^2)^{-p_{1}})
    \circ\pi^{+}(D_{A}^{-2})^{p_{2}}],
\end{equation}
where $\pi^{+}(\nabla_{X}^{A}\nabla_{Y}^{A}(D_{A}^2)^{-p_{1}})$, $\pi^{+}(D_{A}^{-2})^{p_{2}}$ are
 elements in Boutet de Monvel's algebra\cite{Wa3}.
\end{defn}
 For the sub-signature operator
 $\nabla_{X}^{A}\nabla_{Y}^{A}D_{A}^{-2}$ and $D_{A}^{-2}$,
 denote by $\sigma_{l}(\widetilde{A})$ the $l$-order symbol of an operator $\widetilde{A}$. An application of (2.1.4) in \cite{Wa1} shows that
\begin{align}
&\widetilde{Wres}[\pi^{+}(\nabla_{X}^{A}\nabla_{Y}^{A}(D_{A}^{-2})^{p_{1}})
    \circ\pi^{+}(D_{A}^{2})^{-p_{2}}]\nonumber\\
&=\int_{M}\int_{|\xi|=1}\mathrm{tr}_{\wedge^\ast T^\ast M\otimes\mathbb{C}}
  [\sigma_{-n}(\nabla_{X}^{A}\nabla_{Y}^{A}(D_{A}^2)^{-p_{1}}
  \circ (D_{A}^{2})^{-p_{2}}]\sigma(\xi)\texttt{d}x+\int_{\partial M}\Phi,
\end{align}
where
 \begin{align}\label{c4}
\Phi=&\int_{|\xi'|=1}\int_{-\infty}^{+\infty}\sum_{j,k=0}^{\infty}\sum \frac{(-i)^{|\alpha|+j+k+1}}{\alpha!(j+k+1)!}
\mathrm{tr}_{\wedge^\ast T^\ast M\otimes\mathbb{C}}[\partial_{x_{n}}^{j}\partial_{\xi'}^{\alpha}\partial_{\xi_{n}}^{k}\sigma_{r}^{+}
(\nabla_{X}^{A}\nabla_{Y}^{A}(D_{A}^{2})^{-p_{1}})(x',0,\xi',\xi_{n})\nonumber\\
&\times\partial_{x_{n}}^{\alpha}\partial_{\xi_{n}}^{j+1}\partial_{x_{n}}^{k}\sigma_{l}((D_{A}^2)^{-p_{2}})(x',0,\xi',\xi_{n})]
\texttt{d}\xi_{n}\sigma(\xi')\texttt{d}x' ,
\end{align}
and the sum is taken over $r-k-|\alpha|+\ell-j-1=-n,r\leq-p_{1},\ell\leq-p_{2}$.\\

For the sub-signature operator
 $\nabla_{X}^{A}\nabla_{Y}^{A}D_{A}^{-1}$ and $D_{A}^{-3}$,
 similarly we have
\begin{align}
&\widetilde{Wres}[\pi^{+}(\nabla_{X}^{A}\nabla_{Y}^{A}(D_{A}^{-1})^{p_{1}}
\circ\pi^{+}(D_{A}^{3})^{-p_{2}}]\nonumber\\
&=\int_{M}\int_{|\xi|=1}\mathrm{tr}_{\wedge^\ast T^\ast M\otimes\mathbb{C}}
  [\sigma_{-n}(\nabla_{X}^{A}\nabla_{Y}^{A}(D_{A})^{-p_{1}}
  \circ (D_{A}^{3})^{-p_{2}})]\sigma(\xi)\texttt{d}x+\int_{\partial M}\widetilde{\Phi},
\end{align}
where
 \begin{align}\label{c5}
\widetilde{\Phi}=&\int_{|\xi'|=1}\int_{-\infty}^{+\infty}\sum_{j,k=0}^{\infty}\sum \frac{(-i)^{|\alpha|+j+k+1}}{\alpha!(j+k+1)!}
\mathrm{tr}_{\wedge^\ast T^\ast M\otimes\mathbb{C}}[\partial_{x_{n}}^{j}\partial_{\xi'}^{\alpha}\partial_{\xi_{n}}^{k}\sigma_{r}^{+}
(\nabla_{X}^{A}\nabla_{Y}^{A}(D_{A})^{-p_{1}})(x',0,\xi',\xi_{n})\nonumber\\
&\times\partial_{x_{n}}^{\alpha}\partial_{\xi_{n}}^{j+1}\partial_{x_{n}}^{k}\sigma_{l}((D_{A}^{3})^{-p_{2}})(x',0,\xi',\xi_{n})]
\texttt{d}\xi_{n}\sigma(\xi')\texttt{d}x' ,
\end{align}
and the sum is taken over $r-k-|\alpha|+\ell-j-1=-n,r\leq-p_{1},\ell\leq-p_{2}$.

 \section{The residue for the sub-signature operator $\nabla_{X}^{A}\nabla_{Y}^{A}D_{A}^{-2}$ and $D_{A}^{-2}$ }
\label{section:4}
In this section, we compute the spectral Einstein functional for 4-dimension compact manifolds with boundary and get a
Dabrowski-Sitarz-Zalecki type theorem in this case.
 We will consider $D_{A}^2$.
Since $[\sigma_{-4}(\nabla_{X}^{A}\nabla_{Y}^{A}D_{A}^{-2}
  \circ D_{A}^{-2}]|_{M}$ has
the same expression as $[\sigma_{-4}(\nabla_{X}^{A}\nabla_{Y}^{A}D_{A}^{-2}
  \circ D_{A}^{-2}]|_{M}$ in the case of manifolds without boundary,
so locally we can use Theorem 2.2 to compute the first term.
\begin{thm}
 Let M be a 4-dimensional compact manifold without boundary and $\nabla^{A}$ be an orthogonal
connection. Then we get the spectral Einstein functional associated to $\nabla_{X}^{A}\nabla_{Y}^{A}D_{A}^{-2}$
and $D_{A}^{-2}$ on compact manifolds without boundary
 \begin{align}
&Wres[\sigma_{-4}(\nabla_{X}^{A}\nabla_{Y}^{A}D_{A}^{-2}
  \circ D_{A}^{-2})]\nonumber\\
=&\frac{4\pi^{2}}{3}\int_{M}\Big(Ric(X,Y)-\frac{1}{2}sg(X,Y)\Big) vol_{g}-2\int_{M}sg(X,Y) vol_{g},
\end{align}
where $s$ is the scalar curvature.
\end{thm}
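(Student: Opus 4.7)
The plan is to recognize that this statement is essentially the specialization of Theorem~2.2 to dimension $n=4$, i.e.\ $m=2$, and to verify that the constants match.

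First I would observe that the composition $\nabla_{X}^{A}\nabla_{Y}^{A}D_{A}^{-2}\circ D_{A}^{-2}$ equals $\nabla_{X}^{A}\nabla_{Y}^{A}\Delta_{A}^{-2}$, which is exactly the operator whose noncommutative residue is computed by Theorem~2.2 with the choice $m=2$. Since $M$ has no boundary, no boundary term appears in the Wodzicki residue and we may read off the bulk formula directly from Theorem~2.2.

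Next I would plug $m=2$ into the two constants appearing in Theorem~2.2. For the Einstein-part coefficient one gets
\begin{equation*}
\frac{2^{m+1}\pi^{m}}{6\Gamma(m)}\Big|_{m=2}=\frac{2^{3}\pi^{2}}{6\cdot\Gamma(2)}=\frac{8\pi^{2}}{6}=\frac{4\pi^{2}}{3},
\end{equation*}
matching the first coefficient in the theorem. For the endomorphism-part coefficient one gets $2^{2m-3}|_{m=2}=2$, matching the coefficient in front of $\int_{M}sg(X,Y)\,vol_{g}$. Assembling these two contributions reproduces the displayed identity.

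There is essentially no obstacle in this step; the only thing one has to be careful about is the identification of the vector fields ($V,W$ in Theorem~2.2 corresponds to $X,Y$ here) and checking that the trace of the endomorphism $E$ from Lemma~2.1 is exactly $-2^{2m-2}s$ as already computed in the proof of Theorem~2.2, so that the endomorphism contribution $\tfrac{1}{2}\int_{M}(\mathrm{tr}\,E)g(X,Y)vol_{g}$ becomes $-2^{2m-3}\int_{M}sg(X,Y)vol_{g}$, which specializes to $-2\int_{M}sg(X,Y)vol_{g}$ when $m=2$. This completes the proposal.
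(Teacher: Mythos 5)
Your proposal is correct and matches the paper's own treatment: the paper likewise derives this result by observing that $\nabla_{X}^{A}\nabla_{Y}^{A}D_{A}^{-2}\circ D_{A}^{-2}=\nabla_{X}^{A}\nabla_{Y}^{A}\Delta_{A}^{-2}$ and invoking Theorem 2.2 with $m=2$, and your evaluation of the two constants $\frac{2^{m+1}\pi^{m}}{6\Gamma(m)}=\frac{4\pi^{2}}{3}$ and $2^{2m-3}=2$ is exactly right.
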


So we only need to compute $\int_{\partial M}\Phi$. By (\ref{a8}), we have
\begin{align}
\sigma_{1}(D_{A})=&\sqrt{-1}c(\xi);\\
\sigma_{0}(D_{A})=&\frac{1}{4}\sum_{s,t,i}\omega_{s,t}(e_i)c(e_i)\widehat{c}(e_s)\widehat{c}(e_t)-\frac{1}{4}\sum_{s,t,i}\omega_{s,t}(e_i)c(e_i)c(e_s)c(e_t)+A,
\end{align}
where $A=\sum_{i=1}^{n}\sum_{\alpha=1}^{k}c(e_i)\widehat{c}(S(e_i)f_\alpha)\widehat{c}(f_\alpha)$.\\

We define $\nabla_X^{\wedge^\ast T^\ast M}:=X+\frac{1}{4}\sum_{ij}\langle\nabla_X^L{e_i},e_j\rangle c(e_i)c(e_j)-\frac{1}{4}\sum_{ij}\langle\nabla_X^L{e_i},e_j\rangle \widehat{c}(e_i)\widehat{c}(e_j)$, which is a connection on ${\wedge^\ast T^\ast M}$. Set
\begin{equation}
B(X)=\frac{1}{4}\sum_{ij}\langle\nabla_X^L{e_i},e_j\rangle c(e_i)c(e_j)-\frac{1}{4}\sum_{ij}\langle\nabla_X^L{e_i},e_j\rangle \widehat{c}(e_i)\widehat{c}(e_j).
\end{equation}

Let $\nabla_{X}^{A}=X+B(X)-\frac{1}{2}[c(X)A+Ac(X)]$ and $\nabla_{Y}^{A}=Y+B(Y)-\frac{1}{2}[c(Y)A+Ac(Y)]$, we obtain
\begin{align}
\nabla_{X}^{A}\nabla_{Y}^{A}&=\bigg[X+B(X)-\frac{1}{2}[c(X)A+Ac(X)]\bigg]\bigg[Y+B(Y)-\frac{1}{2}[c(Y)A+Ac(Y)]\bigg]\nonumber\\
 &=XY+X[B(Y)]+B(Y)X-\frac{1}{2}X[c(Y)A+Ac(Y)]-\frac{1}{2}[c(Y)A+Ac(Y)]X\nonumber\\
 &~~~~+B(X)Y+B(X)B(Y)-\frac{1}{2}B(X)[c(Y)A+Ac(Y)]-\frac{1}{2}[c(X)A+Ac(X)]Y\nonumber\\
 &~~~~-\frac{1}{2}[c(X)A+Ac(X)]B(Y)+\frac{1}{4}[c(X)A+Ac(X)][c(Y)A+Ac(Y)],
\end{align}
where
$X=\sum_{j=1}^nX_j\partial_{x_j}, Y=\sum_{l=1}^nY_l\partial_{x_l}$.\\

Let $g^{ij}=g(dx_{i},dx_{j})$, $\xi=\sum_{j}\xi_{j}dx_{j}$ and $\nabla^L_{\partial_{i}}\partial_{j}=\sum_{k}\Gamma_{ij}^{k}\partial_{k}$, we denote that
\begin{eqnarray}
&&\sigma_{i}=-\frac{1}{4}\sum_{s,t}\omega_{s,t}
(e_i)c(e_s)c(e_t)
;~~~a_{i}=\frac{1}{4}\sum_{s,t}\omega_{s,t}
(e_i)\widehat{c}(e_s)\widehat{c}(e_t);\nonumber\\
&&\xi^{j}=g^{ij}\xi_{i};~~~~\Gamma^{k}=g^{ij}\Gamma_{ij}^{k};~~~~\sigma^{j}=g^{ij}\sigma_{i};
~~~~a^{j}=g^{ij}a_{i}.\nonumber\\
\end{eqnarray}

Then we have the following lemmas.
\begin{lem}\label{lem3} The following identities hold:
\begin{align}
 \sigma_{0}(\nabla_{X}^{A}\nabla_{Y}^{A})=&X[B(Y)]+B(X)B(Y)
-\frac{1}{2}[c(X)A+Ac(X)]B(Y)-\frac{1}{2}X[c(Y)A+Ac(Y)]\nonumber\\
&-\frac{1}{2}B(X)[c(Y)A+Ac(Y)]+\frac{1}{4}[c(X)A+Ac(X)][c(Y)A+Ac(Y)];\\
\sigma_{1}(\nabla_{X}^{A}\nabla_{Y}^{A})
=&\sum_{j,l=1}^nX_j\frac{\partial_{Y_l}}{\partial_{X_j}}\sqrt{-1}\xi_l
+\sum_jB(Y)X_j\sqrt{-1}\xi_j+\sum_lB(X)Y_l\sqrt{-1}\xi_l\nonumber\\
&-\frac{1}{2}\sum_j [c(Y)A+Ac(Y)] X_j\sqrt{-1}\xi_j-\frac{1}{2}\sum_l [c(X)A+Ac(X)]Y_l\sqrt{-1} \xi_l;\\
\sigma_{2}(\nabla_{X}^{A}\nabla_{Y}^{A})=&-\sum_{j,l=1}^nX_jY_l\xi_j\xi_l.
\end{align}
\end{lem}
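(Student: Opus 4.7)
The plan is to derive the three symbol identities by reading off the $\xi$-homogeneous pieces from the nine-term operator expansion of $\nabla_X^A \nabla_Y^A$ displayed just above the lemma. The key observation is that each $\nabla^A$-factor is a first-order differential operator whose principal part is a genuine vector field ($X$ or $Y$) and whose subprincipal part is a bundle endomorphism $B(\cdot)-\tfrac{1}{2}[c(\cdot)A+Ac(\cdot)]$. Consequently only the product $X\circ Y$ can contribute at order two, exactly one factor in each mixed term contributes at order one, and every purely multiplicative product is zero-order.

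For $\sigma_2$, I would use the elementary identity
\begin{equation*}
X\circ Y = \sum_{j,l} X_j Y_l\,\partial_{x_j}\partial_{x_l} + \sum_{j,l} X_j(\partial_{x_j} Y_l)\,\partial_{x_l},
\end{equation*}
and read off the degree-two part to get $-\sum_{j,l} X_j Y_l \xi_j \xi_l$. For $\sigma_1$, I would collect four kinds of contributions: the lower-order remainder of $X\circ Y$, giving $\sqrt{-1}\sum_{j,l} X_j(\partial_{x_j} Y_l)\xi_l$, together with the first-order pieces of $X\circ B(Y)$, $B(X)\circ Y$, $-\tfrac{1}{2}X\circ[c(Y)A+Ac(Y)]$ and $-\tfrac{1}{2}[c(X)A+Ac(X)]\circ Y$. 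The crucial identity is that for any multiplication operator $M$ one has $X\circ M = X[M] + M\circ X$, so only the $M\circ X$ piece is first-order; substituting the vector-field symbols $\sqrt{-1}X_j\xi_j$ and $\sqrt{-1}Y_l\xi_l$ produces the claimed expression.

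For $\sigma_0$, I would simply collect the six genuinely multiplicative pieces $X[B(Y)]$, $B(X)B(Y)$, $-\tfrac{1}{2}X[c(Y)A+Ac(Y)]$, $-\tfrac{1}{2}B(X)[c(Y)A+Ac(Y)]$, $-\tfrac{1}{2}[c(X)A+Ac(X)]B(Y)$ and $\tfrac{1}{4}[c(X)A+Ac(X)][c(Y)A+Ac(Y)]$, which are already in their final form.

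The main (and essentially only) obstacle is bookkeeping: one must consistently distinguish the operator composition $X\circ M$ from the coefficient derivative $X[M]$, since confusing these moves a term between $\sigma_0$ and $\sigma_1$. No geometric input (curvature, Clifford identities, or connection formulae) enters at this stage; the lemma is a purely symbolic computation that follows from the Leibniz rule plus the standard rule for the symbol of a vector field.
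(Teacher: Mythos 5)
Your proposal is correct and matches the paper's own route: the paper simply displays the nine-term expansion of $\nabla_{X}^{A}\nabla_{Y}^{A}$ (with $X\circ B(Y)$ already rewritten as $X[B(Y)]+B(Y)\circ X$ via the Leibniz rule) immediately before the lemma and reads off the degree $2$, $1$, $0$ pieces using $\sigma_1(\partial_{x_j})=\sqrt{-1}\,\xi_j$. Your explicit emphasis on distinguishing the composition $X\circ M$ from the coefficient derivative $X[M]$ is exactly the bookkeeping the paper performs implicitly, so nothing is missing.
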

Hence by Lemma 2.1 in \cite{Wa3}, we have
 \begin{lem}\label{le:31}
The symbol of the sub-signature operator
\begin{align}
\sigma_{-1}(D_{A}^{-1})&=\frac{\sqrt{-1}c(\xi)}{|\xi|^{2}}; \\
\sigma_{-2}(D_{A}^{-1})&=\frac{c(\xi)\sigma_{0}(D_{A})c(\xi)}{|\xi|^{4}}+\frac{c(\xi)}{|\xi|^{6}}\sum_{j}c(\texttt{d}x_{j})
\Big[\partial_{x_{j}}(c(\xi))|\xi|^{2}-c(\xi)\partial_{x_{j}}(|\xi|^{2})\Big].
\end{align}
\end{lem}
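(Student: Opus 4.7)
The plan is to apply Lemma 2.1 of \cite{Wa3}, which gives the symbol expansion of the inverse of any first-order elliptic operator of Dirac type. Concretely, I would match, order by order, the terms in $\sigma(D_A \circ D_A^{-1}) = \mathrm{id}$ via the Leibniz-type composition formula $\sigma(PQ) \sim \sum_\alpha \frac{(-i)^{|\alpha|}}{\alpha!}\partial_\xi^\alpha\sigma(P)\,\partial_x^\alpha\sigma(Q)$, using the explicit expressions for $\sigma_1(D_A)$ and $\sigma_0(D_A)$ supplied by (4.2)-(4.3).

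Collecting the degree-$0$ terms of $\sigma(D_A \circ D_A^{-1})$ gives $\sigma_1(D_A)\sigma_{-1}(D_A^{-1}) = \mathrm{id}$. Since $\sigma_1(D_A) = \sqrt{-1}\,c(\xi)$ and the Clifford relation yields $c(\xi)^2 = -|\xi|^2$, a direct inversion produces the first identity $\sigma_{-1}(D_A^{-1}) = \frac{\sqrt{-1}\,c(\xi)}{|\xi|^2}$.

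For the second identity I would collect the degree-$(-1)$ terms of the composition, which take the form
\[
\sigma_1(D_A)\sigma_{-2}(D_A^{-1}) + \sigma_0(D_A)\sigma_{-1}(D_A^{-1}) - i\sum_{j} \partial_{\xi_j}\sigma_1(D_A)\,\partial_{x_j}\sigma_{-1}(D_A^{-1}) = 0,
\]
and then left-multiply by $\sigma_{-1}(D_A^{-1})$, which is the Clifford-algebraic inverse of $\sigma_1(D_A)$, in order to isolate $\sigma_{-2}(D_A^{-1})$. Substituting $\partial_{\xi_j}\sigma_1(D_A) = \sqrt{-1}\,c(dx_j)$, expanding $\partial_{x_j}(c(\xi)/|\xi|^2)$ via the quotient rule, and observing that the four resulting $\sqrt{-1}$-factors on the derivative term combine to $+1$, one recovers exactly the formula (4.12).

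The only real obstacle is bookkeeping: one must carefully track the $\sqrt{-1}$ prefactors coming from $\sigma_1$ and $\sigma_{-1}$ together with the $(-i)$ arising in the Leibniz rule, and one must keep the order of Clifford multiplications straight since the algebra is noncommutative. Beyond this, no nontrivial computation is involved; the proof is essentially a specialization of the standard inverse-symbol formula in \cite{Wa3} to the leading and subleading symbol data of $D_A$.
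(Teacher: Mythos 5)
Your proposal is correct and matches the paper's route: the paper simply invokes Lemma 2.1 of \cite{Wa3}, which is exactly the composition-formula computation you carry out (matching orders in $\sigma(D_A\circ D_A^{-1})=1$, inverting $\sigma_1(D_A)=\sqrt{-1}c(\xi)$ via $c(\xi)^2=-|\xi|^2$, and left-multiplying the order $-1$ equation by $\sigma_{-1}(D_A^{-1})$). Your sign/$\sqrt{-1}$ bookkeeping checks out and reproduces both displayed formulas.
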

By (8a) and (11a) in \cite{Ka}, we get
\begin{lem} The following identities hold:
\begin{align}
\sigma_{-2}(D_{A}^{-2})=&|\xi|^{-2};\\
\sigma_{-3}(D_{A}^{-2})=&-\sqrt{-1}|\xi|^{-4}\xi_k\Big(\Gamma^k-2\sigma^k-2a^k-c(\partial_i)A-Ac(\partial_i)\Big)\nonumber\\
&-\sqrt{-1}|\xi|^{-6}2\xi^j\xi_\alpha\xi_\beta\partial_jg^{\alpha\beta}.
\end{align}
\end{lem}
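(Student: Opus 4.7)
The plan is to obtain these symbols by inverting the symbol of $D_A^2$ order by order using the standard composition formula for pseudo-differential operators. Since $D_A^2$ is a Laplace-type operator whose principal symbol is $|\xi|^2$, Kastler's formulas (8a) and (11a) in \cite{Ka} give the general template; what remains is to identify the subleading symbol of $D_A^2$ in the present setting, where $D_A = \widetilde{D} + A$ includes the zero-order endomorphism $A = \sum_{i,\alpha} c(e_i)\widehat{c}(S(e_i)f_\alpha)\widehat{c}(f_\alpha)$.

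First I would write the full symbol of $D_A$ as $\sigma(D_A) = \sqrt{-1}c(\xi) + \sigma_0(D_A)$ with $\sigma_0(D_A) = -\tfrac{1}{4}\sum_{s,t,i}\omega_{s,t}(e_i)c(e_i)c(e_s)c(e_t) + \tfrac{1}{4}\sum_{s,t,i}\omega_{s,t}(e_i)c(e_i)\widehat{c}(e_s)\widehat{c}(e_t) + A$, using the expression already recorded in (2.15)-(2.16) of the excerpt. Then the composition rule
\begin{equation}
\sigma(D_A^2) \sim \sum_{\alpha}\frac{(-\sqrt{-1})^{|\alpha|}}{\alpha!}\,\partial_\xi^\alpha\sigma(D_A)\,\partial_x^\alpha\sigma(D_A)
\end{equation}
produces the principal symbol $p_2 = |\xi|^2$ and a subprincipal symbol $p_1$ that collects a Christoffel contribution $-\sqrt{-1}\,\xi_k\Gamma^k$ (coming from the variation of $g^{ij}$), the connection one-form parts $2\sqrt{-1}\,\xi_k\sigma^k$ and $2\sqrt{-1}\,\xi_k a^k$, plus the mixed piece $\sqrt{-1}\,\xi_k[c(\partial_k)A + A c(\partial_k)]$ arising from the anticommutator of $c(\xi)$ with $A$.

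Next I would invert: writing $\sigma(D_A^{-2}) = q_{-2} + q_{-3} + \cdots$, the identity $\sigma(D_A^2\circ D_A^{-2}) = 1$ yields recursively $p_2 q_{-2} = 1$, giving $q_{-2} = |\xi|^{-2}$, and
\begin{equation}
p_2 q_{-3} + p_1 q_{-2} + (-\sqrt{-1})\sum_j \partial_{\xi_j}p_2 \cdot \partial_{x_j}q_{-2} = 0.
\end{equation}
Solving for $q_{-3}$ produces the first term $-\sqrt{-1}|\xi|^{-4}\xi_k(\Gamma^k - 2\sigma^k - 2a^k - c(\partial_k)A - Ac(\partial_k))$, while the derivative term $\partial_{x_j}|\xi|^{-2} = -|\xi|^{-4}\xi_\alpha\xi_\beta\partial_j g^{\alpha\beta}$ contracted with $\partial_{\xi_j}|\xi|^2 = 2\xi^j$ furnishes the second term $-\sqrt{-1}|\xi|^{-6}\cdot 2\xi^j\xi_\alpha\xi_\beta\partial_j g^{\alpha\beta}$. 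This matches the claimed formula.

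The main obstacle is bookkeeping rather than conceptual: I have to make sure the anticommutator terms $\{c(\xi), A\}$ that arise in forming $\sigma(D_A^2)$ are packaged exactly as $-c(\partial_k)A - Ac(\partial_k)$ (after peeling off one factor of $\xi_k$), and that no extra curvature contribution is accidentally absorbed into $\sigma_{-3}$ rather than into $\sigma_{-4}$. Once the subprincipal symbol of $D_A^2$ is correctly identified, the rest is a direct application of the Kastler inversion formulas, and the two stated identities follow.
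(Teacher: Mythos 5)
Your proposal follows essentially the same route as the paper: the paper simply invokes Kastler's formulas (8a) and (11a), which are precisely the order-by-order inversion $p_2q_{-2}=1$ and $p_2q_{-3}+p_1q_{-2}+\sum_j\partial_{\xi_j}p_2\,D_{x_j}q_{-2}=0$ that you write out, applied after identifying the subprincipal symbol of $D_A^2=(\widetilde{D}+A)^2$ with its extra anticommutator contribution $\{c(\xi),A\}$. Your bookkeeping is correct (including normalizing the paper's stray index in $c(\partial_i)A+Ac(\partial_i)$ to $c(\partial_k)$ so that it contracts with $\xi_k$), so nothing further is needed.
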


 By  Lemma 4.2, Lemma 4.4 and $\sigma(p_{1}\circ p_{2})=\sum_{\alpha}\frac{1}{\alpha!}\partial^{\alpha}_{\xi}[\sigma(p_{1})]
 D_x^{\alpha}[\sigma(p_{2})]$, we obtain
\begin{lem} The following identities hold:
\begin{align}
\sigma_{0}(\nabla_{X}^{A}\nabla_{Y}^{A}D_{A}^{-2})=&
-\sum_{j,l=1}^nX_jY_l\xi_j\xi_l|\xi|^{-2};\\
\sigma_{-1}(\nabla_{X}^{A}\nabla_{Y}^{A}D_{A}^{-2})=&
\sigma_{2}(\nabla_{X}^{A}\nabla_{Y}^{A})\sigma_{-3}(D_{A}^{-2})
+\sigma_{1}(\nabla_{X}^{A}\nabla_{Y}^{A})\sigma_{-2}(D_{A}^{-2})\nonumber\\
&+\sum_{j=1}^{n}\partial_{\xi_{j}}\big[\sigma_{2}(\nabla_{X}^{A}\nabla_{Y}^{A})\big]
D_{x_{j}}\big[\sigma_{-2}(D_{A}^{-2})\big].
\end{align}
\end{lem}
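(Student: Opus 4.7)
The plan is to prove both identities by a direct application of the standard composition formula for classical pseudo-differential symbols,
\[
\sigma(P\circ Q)\sim\sum_{\alpha}\frac{1}{\alpha!}\partial_{\xi}^{\alpha}\sigma(P)\,D_{x}^{\alpha}\sigma(Q),
\]
with $P=\nabla_{X}^{A}\nabla_{Y}^{A}$ (order $2$, symbols supplied by Lemma~4.3) and $Q=D_{A}^{-2}$ (order $-2$, symbols supplied by Lemma~4.4). The computation then reduces to collecting all contributions of the prescribed total order.

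For the zeroth-order symbol I would note that the only pairing with $j+\ell=0$ and $|\alpha|=0$ compatible with $\mathrm{ord}(P)\leq 2$, $\mathrm{ord}(Q)\leq -2$ is
\[
\sigma_{0}(\nabla_{X}^{A}\nabla_{Y}^{A}D_{A}^{-2})=\sigma_{2}(\nabla_{X}^{A}\nabla_{Y}^{A})\,\sigma_{-2}(D_{A}^{-2}).
\]
Substituting $\sigma_{2}(\nabla_{X}^{A}\nabla_{Y}^{A})=-\sum_{j,l}X_{j}Y_{l}\xi_{j}\xi_{l}$ from Lemma~4.3 and $\sigma_{-2}(D_{A}^{-2})=|\xi|^{-2}$ from Lemma~4.4 yields the first claim immediately.

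For the $-1$ symbol I would enumerate every pair $(j,\ell,\alpha)$ with $j+\ell-|\alpha|=-1$, $j\le2$, $\ell\le-2$. The admissible triples are exactly $(2,-3,0)$, $(1,-2,0)$, and $(2,-2,1)$; all other combinations are excluded either by the order bounds or by the absence of a symbol of $D_{A}^{-2}$ at order $-1$ (the top symbol of $D_{A}^{-2}$ sits at order $-2$, so the would-be term $\sigma_{0}(P)\sigma_{-1}(Q)$ vanishes). Summing these three contributions according to the composition formula reproduces the right-hand side of the claimed identity for $\sigma_{-1}$.

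There is essentially no conceptual obstacle here; the statement is a bookkeeping consequence of the symbol calculus together with Lemmas~4.3 and~4.4. The only point that deserves a line of justification is why no $\sigma_{-1}(D_{A}^{-2})$ contribution appears, which I would state explicitly by recalling that $D_{A}^{2}$ is a second-order operator of Laplace type so that the asymptotic expansion of $\sigma(D_{A}^{-2})$ begins at degree $-2$. Once that is noted, the lemma follows by direct substitution, with no regrouping or cancellation required.
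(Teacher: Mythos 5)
Your proposal is correct and follows exactly the route the paper takes: the paper derives this lemma by applying the composition formula $\sigma(p_{1}\circ p_{2})=\sum_{\alpha}\frac{1}{\alpha!}\partial^{\alpha}_{\xi}[\sigma(p_{1})]D_x^{\alpha}[\sigma(p_{2})]$ to the symbols of $\nabla_{X}^{A}\nabla_{Y}^{A}$ and $D_{A}^{-2}$ given in the two preceding lemmas, and your enumeration of the admissible order triples (together with the observation that $\sigma(D_{A}^{-2})$ has no component of order $-1$) is precisely the bookkeeping the paper leaves implicit. The only nit is a citation slip: the symbols of $\nabla_{X}^{A}\nabla_{Y}^{A}$ are in Lemma~4.2, not Lemma~4.3.
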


\indent Since $\Phi$ is a global form on $\partial M$, so for any fixed point $x_0\in\partial M$, we choose the normal coordinates
$U$ of $x_0$ in $\partial M$ (not in $M$) and compute $\Phi(x_0)$ in the coordinates $\widetilde{U}=U\times [0,1)\subset M$ and the
metric $\frac{1}{h(x_n)}g^{\partial M}+dx_n^2.$ The dual metric of $g^M$ on $\widetilde{U}$ is ${h(x_n)}g^{\partial M}+dx_n^2.$  Write
$g^M_{ij}=g^M(\frac{\partial}{\partial x_i},\frac{\partial}{\partial x_j});~ g_M^{ij}=g^M(dx_i,dx_j)$, then

\begin{equation}\label{1}
[g^M_{ij}]= \left[\begin{array}{lcr}
  \frac{1}{h(x_n)}[g_{ij}^{\partial M}]  & 0  \\
   0  &  1
\end{array}\right];\\~~~
[g_M^{ij}]= \left[\begin{array}{lcr}
  h(x_n)[g^{ij}_{\partial M}]  & 0  \\
   0  &  1
\end{array}\right],
\end{equation}
and\\
\begin{equation}\label{2}
\partial_{x_s}g_{ij}^{\partial M}(x_0)=0, 1\leq i,j\leq n-1; ~~~g_{ij}^M(x_0)=\delta_{ij}.
\end{equation}

\indent Let $\{\widetilde{e_1},\cdot\cdot\cdot,\widetilde{e_n}\}$ be an orthonormal frame field in $U$ about $g^{\partial M}$ which is parallel along geodesics and $\widetilde{e_i}=\frac{\partial}{\partial x_i}(x_0)$, then $\{e_1=\sqrt{h(x_n)}\widetilde{e_1},\cdot\cdot\cdot,e_{n-1}=\sqrt{h(x_n)}\widetilde{e_{n-1}},e_n=\frac{\partial}{\partial x_n}\}$ is the orthonormal frame field in $\widetilde{U}$ about $g^M$. Locally ${\wedge^\ast T^\ast M}|\widetilde{U}\cong \widetilde{U}\times\wedge^*_C(n)$. Let $\{f_1,\cdot\cdot\cdot,f_n\}$ be the orthonormal basis of $\wedge^*_C(n)$. Take a spin frame field $\sigma:\widetilde{U}\rightarrow Spin(M)$ such that $\pi\sigma=\{e_1,\cdot\cdot\cdot,e_n\}$ where $\pi:Spin(M)\rightarrow O(M)$ is a double covering, then $\{[\sigma,f_i],1\leq i\leq4\}$ is an orthonormal frame of ${\wedge^\ast T^\ast M}|_{\widetilde{U}}$. In the following, since the global form $\Phi$ is independent of the choice of the local frame, so we can compute $tr_{\wedge^\ast T^\ast M}$ in the frame $\{[\sigma,f_i],1\leq i\leq4\}$. Let $\{\widehat{e_1},\cdot\cdot\cdot,\widehat{e_n}\}$ be the canonical basis of $R^n$ and $c(\widehat{e_i})\in H om(\wedge^*_C(n),\wedge^*_C(n)$ be the Clifford action. By \cite{Y}, then
\begin{align}\label{3}
c(e_i)=[\sigma,c(\widehat{e_i})];~~~c(e_i)[\sigma,f_i]=[\sigma,c(\widehat{e_i})f_i];~~~\frac{\partial}{\partial x_i}=[\sigma,\frac{\partial}{\partial x_i}],
\end{align}
then we have $\frac{\partial}{\partial x_i}c(e_i)=0$ in the above frame.
\begin{lem}{\rm \cite{Wa3}}\label{le:32}
With the metric $g^{TM}$ on $M$ near the boundary
\begin{eqnarray}\label{a9}
\partial_{x_j}(|\xi|_{g^M}^2)(x_0)&=\left\{
       \begin{array}{c}
        0,  ~~~~~~~~~~ ~~~~~~~~~~ ~~~~~~~~~~~~~{\rm if }~j<n, \\[2pt]
       h'(0)|\xi'|^{2}_{g^{\partial M}},~~~~~~~~~~~~~~~~~~~~{\rm if }~j=n,
       \end{array}
    \right.
\end{eqnarray}
\begin{eqnarray}
\partial_{x_j}[c(\xi)](x_0)&=\left\{
       \begin{array}{c}
      0,  ~~~~~~~~~~ ~~~~~~~~~~ ~~~~~~~~~~~~~{\rm if }~j<n,\\[2pt]
\partial_{x_n}(c(\xi'))(x_{0}), ~~~~~~~~~~~~~~~~~{\rm if }~j=n,
       \end{array}
    \right.
\end{eqnarray}
where $\xi=\xi'+\xi_{n}dx_{n}$.
\end{lem}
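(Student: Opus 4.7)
The plan is to prove both identities by direct computation from the explicit collar form of the metric $g^M = \frac{1}{h(x_n)}g^{\partial M} + dx_n^2$ together with the two normal-coordinate facts recorded just before the statement, and to reduce the $c(\xi)$ identity by splitting $\xi = \xi' + \xi_n\, dx_n$.

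First I would dualize the block matrix \eqref{1} to obtain the pointwise expression
\begin{equation*}
|\xi|^2_{g^M} = h(x_n)\,|\xi'|^2_{g^{\partial M}} + \xi_n^2.
\end{equation*}
For $j<n$, the factor $h(x_n)$ and the term $\xi_n^2$ are independent of $x_j$, while $\partial_{x_j}|\xi'|^2_{g^{\partial M}}(x_0) = \sum_{i,k<n}(\partial_{x_j} g^{ik}_{\partial M})(x_0)\,\xi_i\xi_k$ vanishes by \eqref{2}. For $j=n$ only $h(x_n)$ depends on $x_n$, since $g^{\partial M}$ is a boundary-intrinsic tensor, and therefore $\partial_{x_n}(|\xi|^2_{g^M})(x_0) = h'(0)\,|\xi'|^2_{g^{\partial M}}$.

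Next, for the Clifford-action identity I would decompose $c(\xi) = c(\xi') + \xi_n\, c(dx_n)$. Because $e_n = \partial_{x_n}$ is an orthonormal vector field on the whole collar $\widetilde U$, in the trivialization fixed by the frame $\{e_1,\ldots,e_n\}$ the matrix $c(dx_n) = c(e^n)$ is constant in $x$, so $\partial_{x_j}\bigl[\xi_n c(dx_n)\bigr] \equiv 0$ for every $j$. This immediately gives the $j=n$ case $\partial_{x_n}(c(\xi))(x_0) = \partial_{x_n}(c(\xi'))(x_0)$. For $j<n$, I would write $c(\xi') = \sum_{k<n}\xi_k c(dx_k)$ and use the identification $dx_k = \sqrt{h(x_n)}\,\widetilde{e^k}$, where $\widetilde{e^k}$ is dual to the boundary-parallel frame $\widetilde{e_k}$. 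Since $\sqrt{h(x_n)}$ has no tangential dependence and $\partial_{x_l}\widetilde{e_k}(x_0) = 0$ for $l<n$ by the parallel-transport property of $\widetilde{e_k}$ along boundary geodesics through $x_0$, all first tangential derivatives of $c(dx_k)$ at $x_0$ vanish.

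The main obstacle, and really the only delicate point, is keeping the two normal-coordinate setups straight at the same time: the tangential normal coordinates on $\partial M$ centered at $x_0$, and the collar coordinate $x_n$. One must confirm that $\widetilde{e_k}$ is extended trivially in the $x_n$-direction, so that $\partial_{x_n}\widetilde{e^k} \equiv 0$, and that the identification $dx_k = \sqrt{h(x_n)}\,\widetilde{e^k}$ does not introduce hidden $x_n$-dependence through $\widetilde{e^k}$. With that bookkeeping in place, both identities reduce cleanly to \eqref{1}--\eqref{2} combined with the constancy of $c(e^n)$.
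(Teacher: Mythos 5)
Your proof is correct and takes the same route as the source: the paper does not actually reprove this lemma (it is quoted from \cite{Wa3}), and the proof there is precisely your computation --- dualizing the collar metric to get $|\xi|^2_{g^M}=h(x_n)\,|\xi'|^2_{g^{\partial M}}+\xi_n^2$, using $\partial_{x_s}g^{\partial M}_{ij}(x_0)=0$ from the boundary normal coordinates, and exploiting that the Clifford matrices $c(e_i)$ are constant in the frame trivialization $\{[\sigma,f_i]\}$, together with the first-order tangential flatness of the parallel frame at $x_0$. One bookkeeping slip, harmless here: since $e_k=\sqrt{h(x_n)}\,\widetilde{e}_k$, the dual coframes satisfy $e^k=\frac{1}{\sqrt{h(x_n)}}\,\widetilde{e}^k$, so along the fiber through $x_0$ one has $dx_k=\widetilde{e}^k=\sqrt{h(x_n)}\,e^k$; your identification $dx_k=\sqrt{h(x_n)}\,\widetilde{e}^k$ attaches the conformal factor to the wrong coframe and would, if pushed further, give $\partial_{x_n}[c(\xi')](x_0)=h'(0)c(\xi')$ in place of the correct $\tfrac{1}{2}h'(0)c(\xi')$. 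Neither assertion of the lemma is affected, because $\sqrt{h(x_n)}$ has no tangential dependence (so the $j<n$ vanishing survives) and the $j=n$ case uses only the constancy of $c(dx_n)=c(e^n)$.
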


Now we  need to compute $\int_{\partial M} \Phi$. When $n=4$, then ${\rm tr}_{\wedge^*T^*M\otimes\mathbb{C}}[{\rm \texttt{id}}]=2^n=16$, the sum is taken over $
r+l-k-j-|\alpha|=-3,~~r\leq 0,~~l\leq-2,$ then we have the following five cases:\\

\noindent{\bf Case (a)~(I)}~$r=0,~l=-2,~k=j=0,~|\alpha|=1$.\\

\noindent By (\ref{c4}), we get
\begin{equation}
\label{b24}
\Phi_1=-\int_{|\xi'|=1}\int^{+\infty}_{-\infty}\sum_{|\alpha|=1}
\mathrm{tr}[\partial^\alpha_{\xi'}\pi^+_{\xi_n}\sigma_{0}(\nabla_{X}^{A}\nabla_{Y}^{A}D_{A}^{-2})\times
 \partial^\alpha_{x'}\partial_{\xi_n}\sigma_{-2}(D_{A}^{-2})](x_0)d\xi_n\sigma(\xi')dx'.
\end{equation}
By Lemma 2.2 in \cite{Wa3}, for $i<n$,
\begin{equation}
\label{b25}
\partial_{x_i}\sigma_{-2}(D_{A}^{-2})(x_0)=
\partial_{x_i}(|\xi|^{-2})(x_0)=
-\frac{\partial_{x_i}(|\xi|^{2})(x_0)}{|\xi|^4}=0,
\end{equation}
 so $\Phi_1=0$.\\

\noindent{\bf Case (a)~(II)}~$r=0,~l=-2,~k=|\alpha|=0,~j=1$.\\

\noindent By (\ref{c4}), we get
\begin{equation}
\label{b26}
\Phi_2=-\frac{1}{2}\int_{|\xi'|=1}\int^{+\infty}_{-\infty}
\mathrm{tr}[\partial_{x_n}\pi^+_{\xi_n}\sigma_{0}(\nabla_{X}^{A}\nabla_{Y}^{A}D_{A}^{-2})\times
\partial_{\xi_n}^2\sigma_{-2}(D_{A}^{-2})](x_0)d\xi_n\sigma(\xi')dx'.
\end{equation}
By Lemma 4.4, we have
\begin{eqnarray}\label{4}
\partial_{\xi_n}^2\sigma_{-2}(D_{A}^{-2})(x_0)|_{|\xi'|=1}=\partial_{\xi_n}^2(|\xi|^{-2})(x_0)|_{|\xi'|=1}=\frac{6\xi_n^2-2}{(1+\xi_n^2)^3}.
\end{eqnarray}
It follows that
\begin{align}\label{5}
\partial_{x_n}\sigma_{0}(\nabla_{X}^{A}\nabla_{Y}^{A}D_{A}^{-2})(x_0)|_{|\xi'|=1}
=&\partial_{x_n}(-\sum_{j,l=1}^nX_jY_l\xi_j\xi_l|\xi|^{-2})(x_0)|_{|\xi'|=1}\nonumber\\
=&\frac{1}{(1+\xi_n^2)^2}\sum_{j,l=1}^nX_jY_l\xi_j\xi_lh'(0).
\end{align}
By integrating formula, we obtain
\begin{align}\label{6}
\pi^+_{\xi_n}\partial_{x_n}\sigma_{0}(\nabla_{X}^{A}\nabla_{Y}^{A}D_{A}^{-2})(x_0)|_{|\xi'|=1}
=&\partial_{x_n}\pi^+_{\xi_n}\sigma_{0}(\nabla_{X}^{A}\nabla_{Y}^{A}D_{A}^{-2})(x_0)|_{|\xi'|=1}\nonumber\\
=&-\frac{i\xi_n}{4(\xi_n-i)^2}\sum_{j,l=1}^{n-1}X_jY_l\xi_j\xi_lh'(0)+\frac{2-i\xi_n}{4(\xi_n-i)^2}X_nY_nh'(0)\nonumber\\
&-\frac{i}{4(\xi_n-i)^2}\sum_{j=1}^{n-1}X_jY_n\xi_j-\frac{i}{4(\xi_n-i)^2}\sum_{l=1}^{n-1}X_nY_l\xi_l.
\end{align}
From (\ref{4}) and (\ref{6}), we obtain
\begin{align}\label{7}
&\mathrm{tr} [\partial_{x_n}\pi^+_{\xi_n}\sigma_{0}(\nabla_{X}^{A}\nabla_{Y}^{A}D_{A}^{-2})\times
\partial_{\xi_n}^2\sigma_{-2}(D_{A}^{-2})](x_0)|_{|\xi'|=1}\nonumber\\
=&8\frac{-3i\xi_n^3+i\xi_n}{(\xi_n-i)^5(\xi_n+i)^3}\sum_{j,l=1}^{n-1}X_jY_l\xi_j\xi_lh'(0)
+8\frac{-3i\xi_n^3+6\xi_n^2+i\xi_n-2}{(\xi_n-i)^5(\xi_n+i)^3}X_nY_nh'(0)\nonumber\\
&+8\frac{(1-3\xi_n^2)i}{(\xi_n-i)^5(\xi_n+i)^3}\sum_{j=1}^{n-1}X_jY_n\xi_j
+8\frac{(1-3\xi_n^2)i}{(\xi_n-i)^5(\xi_n+i)^3}\sum_{l=1}^{n-1}X_nY_l\xi_l.
\end{align}
We note that $i<n,~\int_{|\xi'|=1}\{\xi_{i_{1}}\xi_{i_{2}}\cdots\xi_{i_{2q+1}}\}\sigma(\xi')=0$,
so we omit some items that have no contribution for computing $\Phi_2$. Therefore, we get
\begin{align}\label{8}
\Phi_2=&-\frac{1}{2}\int_{|\xi'|=1}\int^{+\infty}_{-\infty}\bigg\{8\frac{-3i\xi_n^3+i\xi_n}{(\xi_n-i)^5(\xi_n+i)^3}\sum_{j,l=1}^{n-1}X_jY_l\xi_j\xi_lh'(0)\nonumber\\
&+8\frac{-3i\xi_n^3+6\xi_n^2+i\xi_n-2}{(\xi_n-i)^5(\xi_n+i)^3}X_nY_nh'(0)\bigg\}d\xi_n\sigma(\xi')dx'\nonumber\\
=&-4\sum_{j,l=1}^{n-1}X_jY_lh'(0)\int_{|\xi'|=1}\int_{\Gamma^{+}}\frac{-3i\xi_n^3+i\xi_n}{(\xi_n-i)^5(\xi_n+i)^3}\xi_j\xi_ld\xi_{n}\sigma(\xi')dx'\nonumber\\
&-4X_nY_nh'(0)\Omega_3\int_{\Gamma^{+}}\frac{-3i\xi_n^3+6\xi_n^2+i\xi_n-2}{(\xi_n-i)^5(\xi_n+i)^3}d\xi_{n}dx'\nonumber\\
=&-4\sum_{j,l=1}^{n-1}X_jY_lh'(0)\frac{4\pi}{3}\frac{2\pi i}{4!}\left[\frac{-3i\xi_n^3+i\xi_n}{(\xi_n+i)^3}\right]^{(4)}\bigg|_{\xi_n=i}dx'\nonumber\\
&-4X_nY_nh'(0)\Omega_3\frac{2\pi i}{4!}\left[\frac{-3i\xi_n^3+6\xi_n^2+i\xi_n-2}{(\xi_n+i)^3}\right]^{(4)}\bigg|_{\xi_n=i}dx'\nonumber\\
=&-\frac{\pi^2}{2}\sum_{j=1}^{n-1}X_jY_jh'(0)dx'-\frac{15}{8}X_nY_nh'(0)\pi\Omega_3dx',
\end{align}
where ${\rm \Omega_{3}}$ is the canonical volume of $S^{2}.$\\

\noindent {\bf Case (a)~(III)}~$r=0,~l=-2,~j=|\alpha|=0,~k=1$.\\

\noindent By (\ref{c4}), we get
\begin{align}\label{9}
\Phi_3&=-\frac{1}{2}\int_{|\xi'|=1}\int^{+\infty}_{-\infty}
\mathrm{tr} [\partial_{\xi_n}\pi^+_{\xi_n}\sigma_{0}(\nabla_{X}^{A}\nabla_{Y}^{A}D_{A}^{-2})\times
\partial_{\xi_n}\partial_{x_n}\sigma_{-2}(D_{A}^{-2})](x_0)d\xi_n\sigma(\xi')dx'\nonumber\\
&=\frac{1}{2}\int_{|\xi'|=1}\int^{+\infty}_{-\infty}
\mathrm{tr}[\partial_{\xi_n}^2\pi^+_{\xi_n}\sigma_{0}(\nabla_{X}^{A}\nabla_{Y}^{A}D_{A}^{-2})\times
\partial_{x_n}\sigma_{-2}(D_{A}^{-2})](x_0)d\xi_n\sigma(\xi')dx'.
\end{align}
 By Lemma 4.4, we have
\begin{eqnarray}\label{10}
\partial_{x_n}\sigma_{-2}(D_{A}^{-2})(x_0)|_{|\xi'|=1}
=-\frac{h'(0)}{(1+\xi_n^2)^2}.
\end{eqnarray}
An easy calculation gives
\begin{align}\label{11}
\pi^+_{\xi_n}\sigma_{0}(\nabla_{X}^{A}\nabla_{Y}^{A}D_{A}^{-2})(x_0)|_{|\xi'|=1}
=&\frac{i}{2(\xi_n-i)}\sum_{j,l=1}^{n-1}X_jY_l\xi_j\xi_l-\frac{1}{2(\xi_n-i)}X_nY_n\nonumber\\
&-\frac{1}{2(\xi_n-i)}\sum_{j=1}^{n-1}X_jY_n\xi_j-\frac{1}{2(\xi_n-i)}\sum_{l=1}^{n-1}X_nY_l\xi_l.
\end{align}
Also, straightforward computations yield
\begin{align}\label{111}
\partial_{\xi_n}^2\pi^+_{\xi_n}\sigma_{0}(\nabla_{X}^{A}\nabla_{Y}^{A}D_{A}^{-2})(x_0)|_{|\xi'|=1}
=\frac{i}{(\xi_n-i)^3}\sum_{j,l=1}^{n-1}X_jY_l\xi_j\xi_l-\frac{1}{(\xi_n-i)^3}X_nY_n.
\end{align}
From (\ref{10}) and (\ref{111}), we obtain
\begin{align}\label{12}
&\mathrm{tr}[\partial_{\xi_n}^2\pi^+_{\xi_n}\sigma_{0}(\nabla_{X}^{A}\nabla_{Y}^{A}D_{A}^{-2})\times
\partial_{x_n}\sigma_{-2}(D_{A}^{-2})](x_0)|_{|\xi'|=1}\nonumber\\
=&\frac{-16h'(0)i}{(\xi_n-i)^5(\xi_n+i)^2}\sum_{j,l=1}^{n-1}X_jY_l\xi_j\xi_l+\frac{16h'(0)}{(\xi_n-i)^5(\xi_n+i)^2}X_nY_n.
\end{align}
Therefore, we get
\begin{align}\label{13}
\Phi_3=&\frac{1}{2}\int_{|\xi'|=1}\int^{+\infty}_{-\infty}
\bigg(\frac{-16h'(0)i}{(\xi_n-i)^5(\xi_n+i)^2}
\sum_{j,l=1}^{n-1}X_jY_l\xi_j\xi_l+\frac{16h'(0)}{(\xi_n-i)^5(\xi_n+i)^2}X_nY_n\bigg)d\xi_n\sigma(\xi')dx'\nonumber\\
=&-8\sum_{j,l=1}^{n-1}X_jY_lh'(0)\int_{|\xi'|=1}\int_{\Gamma^{+}}\frac{i}{(\xi_n-i)^5(\xi_n+i)^2}\xi_j\xi_ld\xi_{n}\sigma(\xi')dx'\nonumber\\
&+8X_nY_nh'(0)\Omega_3\int_{\Gamma^{+}}\frac{1}{(\xi_n-i)^5(\xi_n+i)^2}d\xi_{n}dx'\nonumber\\
=&-8\sum_{j,l=1}^{n-1}X_jY_lh'(0)\frac{4\pi}{3}\frac{2\pi i}{4!}\left[\frac{i}{(\xi_n+i)^2}\right]^{(4)}
\bigg|_{\xi_n=i}dx'+8X_nY_nh'(0)\Omega_3\frac{2\pi i}{4!}\left[\frac{1}{(\xi_n+i)^2}\right]^{(4)}\bigg|_{\xi_n=i}dx'\nonumber\\
=&-\frac{5\pi^2}{3}\sum_{j=1}^{n-1}X_jY_jh'(0)dx'-\frac{5i}{4}X_nY_nh'(0)\pi\Omega_3dx'.
\end{align}

\noindent{\bf Case (b)}~$r=0,~l=-3,~k=j=|\alpha|=0$.\\

\noindent By (\ref{c4}), we get
\begin{align}\label{14}
\Phi_4&=-i\int_{|\xi'|=1}\int^{+\infty}_{-\infty}\mathrm{tr} [\pi^+_{\xi_n}
\sigma_{0}(\nabla_{X}^{A}\nabla_{Y}^{A}D_{A}^{-2})\times
\partial_{\xi_n}\sigma_{-3}(D_{A}^{-2})](x_0)d\xi_n\sigma(\xi')dx'\nonumber\\
&=i\int_{|\xi'|=1}\int^{+\infty}_{-\infty}\mathrm{tr} [\partial_{\xi_n}\pi^+_{\xi_n}
\sigma_{0}(\nabla_{X}^{A}\nabla_{Y}^{A}D_{A}^{-2})\times
\sigma_{-3}(D_{A}^{-2})](x_0)d\xi_n\sigma(\xi')dx'.
\end{align}
 By Lemma 4.4, we have
\begin{align}\label{15}
\sigma_{-3}(D_{A}^{-2})(x_0)|_{|\xi'|=1}
=&-\frac{i}{(1+\xi_n^2)^2}\bigg(\frac{1}{2}h'(0)\sum_{k<n}\xi_k[c(e_k)c(e_n)-\widehat{c}(e_k)\widehat{c}(e_n)]\nonumber\\
&-\sum_{k<n}\xi_k[c(\partial_k)A+Ac(\partial_k)]+\frac{3}{2}h'(0)\xi_n\bigg)
-\frac{2ih'(0)\xi_n}{(1+\xi_n^2)^3}.
\end{align}
An easy calculation gives
\begin{align}\label{16}
\partial_{\xi_n}\pi^+_{\xi_n}\sigma_{0}(\nabla_{X}^{A}\nabla_{Y}^{A}D_{A}^{-2})(x_0)|_{|\xi'|=1}
=&-\frac{i}{2(\xi_n-i)^2}\sum_{j,l=1}^{n-1}X_jY_l\xi_j\xi_l-\frac{1}{2(\xi_n-i)^2}X_nY_n\nonumber\\
&+\frac{1}{2(\xi_n-i)^2}\sum_{j=1}^{n-1}X_jY_n\xi_j+\frac{1}{2(\xi_n-i)^2}\sum_{l=1}^{n-1}X_nY_l\xi_l.
\end{align}
We note that $i<n,~\int_{|\xi'|=1}\{\xi_{i_{1}}\xi_{i_{2}}\cdots\xi_{i_{2q+1}}\}\sigma(\xi')=0$, so we omit some items that have no contribution for computing $\Phi_4$.
Then, we have
\begin{align}\label{17}
&\mathrm{tr}[\partial_{\xi_n}\pi^+_{\xi_n}\sigma_{0}(\nabla_{X}^{A}\nabla_{Y}^{A}D_{A}^{-2})\times
\sigma_{-3}(D_{A}^{-2})](x_0)|_{|\xi'|=1}\nonumber\\
=&\mathrm{tr}\bigg[\frac{-ih'(0)}{4(\xi_n-i)^4(\xi_n+i)^2}\bigg(\sum_{k,j=1}^{n-1}X_jY_n\xi_k\xi_j[c(e_k)c(e_n)-\widehat{c}(e_k)\widehat{c}(e_n)]+\sum_{k,l=1}^{n-1}X_nY_l\xi_k\xi_l[c(e_k)c(e_n)-\widehat{c}(e_k)\widehat{c}(e_n)]\bigg)\nonumber\\
&+\frac{i}{2(\xi_n-i)^4(\xi_n+i)^2}\bigg(\sum_{k,j=1}^{n-1}X_jY_n\xi_k\xi_j[c(\partial_k)A+Ac(\partial_k)]+\sum_{k,l=1}^{n-1}X_nY_l\xi_k\xi_l[c(\partial_k)A+Ac(\partial_k)]\bigg)\nonumber\\
&-\frac{h'(0)(3\xi_n^3+7\xi_n)} {4(\xi_n-i)^5(\xi_n+i)^3}\sum_{j,l=1}^{n-1}X_jY_l\xi_j\xi_l+\frac{ih'(0)(3\xi_n^3+7\xi_n)}{4(\xi_n-i)^5(\xi_n+i)^3}X_nY_n\bigg].
\end{align}
By the relation of the Clifford action and ${\rm tr}(AB)={\rm tr }(BA)$, then we have the equalities:
\begin{align}\label{18}
&{\rm tr }[c(e_k)c(e_n)]=0(k<n);~~~~{\rm tr }[\widehat{c}(e_k)\widehat{c}(e_n)]=0(k<n);\nonumber\\
&{\rm tr }[c(\partial_k)A]=0;~~~~{\rm tr }[Ac(\partial_k)]=0.
\end{align}
Therefore, we get
\begin{align}\label{19}
\Phi_4=&i\int_{|\xi'|=1}\int^{+\infty}_{-\infty}
\bigg(-\frac{4h'(0)(3\xi_n^3+7\xi_n)} {(\xi_n-i)^5(\xi_n+i)^3}\sum_{j,l=1}^{n-1}X_jY_l\xi_j\xi_l+\frac{4ih'(0)(3\xi_n^3+7\xi_n)}{(\xi_n-i)^5(\xi_n+i)^3}X_nY_n\bigg)d\xi_n\sigma(\xi')dx'\nonumber\\
=&-4i\sum_{j,l=1}^{n-1}X_jY_lh'(0)\int_{|\xi'|=1}\int_{\Gamma^{+}}\frac{3\xi_n^3+7\xi_n}{(\xi_n-i)^5(\xi_n+i)^2}\xi_j\xi_ld\xi_{n}\sigma(\xi')dx'\nonumber\\
&-4X_nY_nh'(0)\Omega_3\int_{\Gamma^{+}}\frac{3\xi_n^3+7\xi_n}{(\xi_n-i)^5(\xi_n+i)^3}d\xi_{n}dx'\nonumber\\
=&-4i\sum_{j,l=1}^{n-1}X_jY_lh'(0)\frac{4\pi}{3}\frac{2\pi i}{4!}\left[\frac{3\xi_n^3+7\xi_n}{(\xi_n+i)^3}\right]^{(4)}\bigg|_{\xi_n=i}dx'-4X_nY_nh'(0)\Omega_3\frac{2\pi i}{4!}\left[\frac{3\xi_n^3+7\xi_n}{(\xi_n+i)^3}\right]^{(4)}\bigg|_{\xi_n=i}dx'\nonumber\\
=&\frac{17\pi^2}{4}\sum_{j=1}^{n-1}X_jY_jh'(0)dx'-\frac{51i}{16}X_nY_nh'(0)\pi\Omega_3dx'.
\end{align}

\noindent{\bf  Case (c)}~$r=-1,~\ell=-2,~k=j=|\alpha|=0$.\\

\noindent By (\ref{c4}), we get
\begin{align}\label{20}
\Phi_5=-i\int_{|\xi'|=1}\int^{+\infty}_{-\infty}\mathrm{tr} [\pi^+_{\xi_n}
\sigma_{-1}(\nabla_{X}^{A}\nabla_{Y}^{A}D_{A}^{-2})\times
\partial_{\xi_n}\sigma_{-2}(D_{A}^{-2})](x_0)d\xi_n\sigma(\xi')dx'.
\end{align}
By Lemma 4.4, we have
\begin{align}\label{21}
\partial_{\xi_n}\sigma_{-2}(D_{A}^{-2})(x_0)|_{|\xi'|=1}=-\frac{2\xi_n}{(1+\xi_n^2)^2}.
\end{align}
Since
\begin{align}\label{a21}
\sigma_{-1}(\nabla_{X}^{A}\nabla_{Y}^{A}D_{A}^{-2})(x_0)|_{|\xi'|=1}
=&
\sigma_{2}(\nabla_{X}^{A}\nabla_{Y}^{A})\sigma_{-3}(D_{A}^{-2})
+\sigma_{1}(\nabla_{X}^{A}\nabla_{Y}^{A})\sigma_{-2}(D_{A}^{-2})\nonumber\\
&+\sum_{j=1}^{n}\partial_{\xi_{j}}\big[\sigma_{2}(\nabla_{X}^{A}\nabla_{Y}^{A})\big]
D_{x_{j}}\big[\sigma_{-2}(D_{A}^{-2})\big].
\end{align}

(1) Explicit representation the first item of (\ref{a21}),
\begin{align}
&\sigma_{2}(\nabla_{X}^{A}\nabla_{Y}^{A})\sigma_{-3}(D_{A}^{-2})(x_0)|_{|\xi'|=1}\nonumber\\
=&-\sum_{j,l=1}^{n}X_jY_l\xi_j\xi_l\times\Big(-\sqrt{-1}|\xi|^{-4}\xi_k(\Gamma^k-2\sigma^k-2a^k-c(\partial_i)A-Ac(\partial_i))-\sqrt{-1}|\xi|^{-6}2\xi^j\xi_\alpha\xi_\beta\partial_jg^{\alpha\beta}\Big)\nonumber\\
=&-\sum_{j,l=1}^{n}X_jY_l\xi_j\xi_l\times\bigg(-\frac{i}{(1+\xi_n^2)^2}\Big[\frac{1}{2}h'(0)\sum_{k<n}\xi_k[c(e_k)c(e_n)-\widehat{c}(e_k)\widehat{c}(e_n)]-\sum_{k<n}\xi_k[c(\partial_k)A+Ac(\partial_k)]\nonumber\\
&+\frac{3}{2}h'(0)\xi_n\Big]-\frac{2ih'(0)\xi_n}{(1+\xi_n^2)^3}\bigg).
\end{align}

(2) Explicit representation the second item of (\ref{a21}),
\begin{align}
&\sigma_{1}(\nabla_{X}^{A}\nabla_{Y}^{A})\sigma_{-2}(D_{A}^{-2})(x_0)|_{|\xi'|=1}\nonumber\\
=&\Big(\sum_{j,l=1}^nX_j\frac{\partial_{Y_l}}{\partial_{X_j}}\sqrt{-1}\xi_l
+\sum_jB(Y)X_j\sqrt{-1}\xi_j+\sum_lB(X)Y_l\sqrt{-1}\xi_l\nonumber\\
&-\frac{1}{2}\sum_j [c(Y)A+Ac(Y)] X_j\sqrt{-1}\xi_j-\frac{1}{2}\sum_l [c(X)A+Ac(X)]Y_l\sqrt{-1} \xi_l\Big)\times|\xi|^{-2}.
\end{align}

(3) Explicit representation the third item of (\ref{a21}),
\begin{align}
&\sum_{j=1}^{n}\sum_{\alpha}\frac{1}{\alpha!}\partial^{\alpha}_{\xi}\big[\sigma_{2}(\nabla_{X}^{A}\nabla_{Y}^{A})\big]D_x^\alpha\big[\sigma_{-2}(D_{A}^{-2})\big](x_0)|_{|\xi'|=1}\nonumber\\
=&\sum_{j=1}^{n}\partial_{\xi_{j}}\big[\sigma_{2}(\nabla_{X}^{A}\nabla_{Y}^{A})\big](-\sqrt{-1})\partial_{x_j}[\sigma_{-2}(D_{A}^{-2})\big]\nonumber\\
=&\sum_{j=1}^{n}\partial_{\xi_{j}}\big[-\sum_{j,l=1}^nX_jY_l\xi_j\xi_l\big]
(-\sqrt{-1})\partial_{x_{j}}\big[|\xi|^{-2}\big]\nonumber\\
=&\sum_{j=1}^{n}\sum_{l=1}^{n}\sqrt{-1}(X_{j}Y_l+X_{l}Y_j)\xi_{l}\partial_{x_{j}}(|\xi|^{-2}).
\end{align}
We note that $i<n,~\int_{|\xi'|=1}\{\xi_{i_{1}}\xi_{i_{2}}\cdots\xi_{i_{2q+1}}\}\sigma(\xi')=0$, so we omit some items that have no contribution for computing $\Phi_5$.
An easy calculation gives
\begin{align}\label{22}
&\mathrm{tr}[\pi^+_{\xi_n}\sigma_{2}(\nabla_{X}^{A}\nabla_{Y}^{A})\sigma_{-3}(D_{A}^{-2})\times
\partial_{\xi_n}\sigma_{-2}(D_{A}^{-2})](x_0)|_{|\xi'|=1}\nonumber\\
=&-\frac{12h'(0)\xi_n}{(\xi_n-i)^4(\xi_n+i)^2}\sum_{j,l=1}^{n-1}X_jY_l\xi_j\xi_l+\frac{2ih'(0)(2i\xi_n^2+6\xi_n)}{(\xi_n-i)^5(\xi_n+i)^2}\sum_{j,l=1}^{n-1}X_jY_l\xi_j\xi_l\nonumber\\
&-\frac{12h'(0)\xi_n}{(\xi_n-i)^4(\xi_n+i)^2}X_nY_n-\frac{16ih'(0)\xi_n}{(\xi_n-i)^5(\xi_n+i)^2}X_nY_n.
\end{align}
By the relation of the Clifford action and ${\rm tr}(AB)={\rm tr }(BA)$, then we have the equalities:
\begin{align}\label{18}
&{\rm tr }[B(X)]=0;~~{\rm tr }[B(Y)]=0;~~{\rm tr }[c(X)A]=0;\nonumber\\
&{\rm tr }[Ac(X)]=0;~~{\rm tr }[c(Y)A]=0;~~{\rm tr }[Ac(Y)]=0.
\end{align}
Then
\begin{align}\label{23}
&\mathrm{tr}[\pi^+_{\xi_n}\sigma_{1}(\nabla_{X}^{A}\nabla_{Y}^{A})\sigma_{-2}(D_{A}^{-2})\times
\partial_{\xi_n}\sigma_{-2}(D_{A}^{-2})](x_0)|_{|\xi'|=1}\nonumber\\
=&X_n\frac{\partial{Y_n}}{\partial{x_n}} \frac{-16i\xi_n}{(\xi_n-i)^3(\xi_n+i)^2}.
\end{align}
  Also, straightforward computations yield
\begin{align}\label{24}
&\mathrm{tr}\bigg[\pi^+_{\xi_n}\Big(\sum_{j=1}^{n}\sum_{\alpha}\frac{1}{\alpha!}\partial^{\alpha}_{\xi}
\big[\sigma_{2}(\nabla_{X}^{A}\nabla_{Y}^{A})\big]
D_x^{\alpha}\big[\sigma_{-2}(D_{A}^{-2})\big]\Big)\times
\partial_{\xi_n}\sigma_{-2}(D_{A}^{-2})\bigg](x_0)|_{|\xi'|=1}\nonumber\\
=&\frac{-64ih'(0)\xi_n^2}{(\xi_n-i)^2(\xi_n+i)^2}X_nY_n.
\end{align}
Substituting (\ref{22})-(\ref{24}) into (\ref{20}) yields
\begin{align}\label{25}
\Phi_5=&-\frac{11\pi^2}{3}\sum_{j=1}^{n-1}X_jY_jh'(0)dx'-32X_nY_nh'(0)\pi\Omega_3dx'\nonumber\\
&-X_n\frac{\partial{Y_n}}{\partial{x_n}}\frac{\pi}{2}\Omega_3dx'.
\end{align}
Let $X=X^T+X_n\partial_n,~Y=Y^T+Y_n\partial_n,$ then we have $\sum_{j=1}^{n-1}X_jY_j(x_0)=g(X^T,Y^T)(x_0).$
Now $\Phi$ is the sum of the $\Phi_{(1,2,\cdot\cdot\cdot,5)}$. Therefore, we get
\begin{align}\label{26}
\Phi=&\sum_{i=1}^5\Phi_i=-\frac{542+71i}{16}X_nY_nh'(0)\pi\Omega_3dx'-\frac{19\pi^2}{12}g(X^T,Y^T)h'(0)dx'
-X_n\frac{\partial{Y_n}}{\partial{x_n}}\frac{\pi}{2}\Omega_3dx'.
\end{align}
Then, we obtain the following theorem.
\begin{thm}\label{thmb1}
 Let $M$ be a 4-dimensional compact manifold with boundary and $\nabla^{A}$ be an orthogonal
connection. Then we get the spectral Einstein functional associated to $\nabla_{X}^{A}\nabla_{Y}^{A}D_{A}^{-2}$
and $D_{A}^{-2}$ on compact manifolds with boundary
\begin{align}
\label{b2773}
&\widetilde{{\rm Wres}}[\pi^+(\nabla_{X}^{A}\nabla_{Y}^{A}D_{A}^{-2})\circ\pi^+(D_{A}^{-2})]\nonumber\\
=&\frac{4\pi^2}{3}\int_{M}\Big(Ric(V,W)-\frac{1}{2}sg(V,W)\Big) vol_{g}-2\int_{M}sg(V,W) vol_{g}\nonumber\\
&+\int_{\partial M}\Big(-\frac{542+71i}{16}X_nY_nh'(0)\pi\Omega_3-\frac{19\pi^2}{12}g(X^T,Y^T)h'(0)-X_n\frac{\partial{Y_n}}{\partial{x_n}}\frac{\pi}{2}\Omega_3\Big)vol_{\partial M},
\end{align}
where $s$ is the scalar curvature.
\end{thm}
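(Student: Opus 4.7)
The plan is to split $\widetilde{\mathrm{Wres}}[\pi^+(\nabla_X^A\nabla_Y^A D_A^{-2})\circ\pi^+(D_A^{-2})]$ into its interior and boundary pieces using the Fedosov--Golse--Leichtnam--Schrohe formula (Theorem \ref{th:32}) and the expansion given in equations~(\ref{c4})--(\ref{c5}). The interior piece is a pointwise quantity depending only on the complete symbols, so it coincides locally with the residue on the closed manifold treated in Section~\ref{section:2}. Consequently I would simply invoke Theorem~2.2 with $n=4$ (hence $m=2$), noting $\upsilon_3=2\pi^2$ and $\mathrm{tr}\,E=-2^{2m-2}s=-4s$, to obtain the two bulk integrals $\frac{4\pi^2}{3}\int_M(Ric(V,W)-\tfrac12 sg(V,W))\,vol_g-2\int_M sg(V,W)\,vol_g$. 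This reduces the problem to the evaluation of $\int_{\partial M}\Phi$.

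To handle $\Phi$, I would first enumerate the summation indices. The constraint $r+\ell-k-j-|\alpha|=-n+2=-3$ with $r\le 0$ and $\ell\le -2$ admits, after discarding cases that force derivative orders to exceed what the available symbols can supply, exactly five configurations: $(r,\ell,k,j,|\alpha|)=(0,-2,0,0,1),\ (0,-2,0,1,0),\ (0,-2,1,0,0),\ (0,-3,0,0,0),\ (-1,-2,0,0,0)$. Labeling these $\Phi_1,\dots,\Phi_5$, each is computed by the same recipe: specialize to the boundary normal coordinate system of Lemma~\ref{le:32} (so $g_{ij}^M(x_0)=\delta_{ij}$ and only $x_n$--derivatives of the metric survive, with $\partial_{x_n}|\xi|^2(x_0)=h'(0)|\xi'|^2$), insert the symbol formulas from Lemmas~4.3--4.5, apply $\pi^+_{\xi_n}$ via the contour-integral formula (\ref{}) using the single pole at $\xi_n=i$ of the $D_A^{-2}$ factor, compute the trace on $\wedge^*T^*M\otimes\mathbb{C}$, and evaluate the remaining $\xi_n$--integral by residues at $\xi_n=i$ against the $(\xi_n+i)^{-p}$ factor coming from the second operator.

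Several simplifications should be exploited in parallel to keep the calculation manageable. First, the trace identities $\mathrm{tr}[c(e_s)c(e_t)]=\mathrm{tr}[\hat c(e_s)\hat c(e_t)]=0$ for $s\neq t$, together with $\mathrm{tr}[c(e_k)A]=\mathrm{tr}[Ac(e_k)]=0$ and $\mathrm{tr}\,B(X)=0$, kill most of the terms coming from $\sigma_0(\nabla_X^A\nabla_Y^A)$ and from the curvature term in $\sigma_{-3}(D_A^{-2})$. Second, the parity identity $\int_{|\xi'|=1}\xi_{i_1}\cdots\xi_{i_{2q+1}}\sigma(\xi')=0$ eliminates every term involving an odd number of tangential $\xi$-components, which in particular removes the cross terms $\sum_{j<n}X_jY_n\xi_j$ and $\sum_{l<n}X_nY_l\xi_l$ from all five contributions. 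After these reductions, each $\Phi_i$ splits cleanly into a tangential piece proportional to $\sum_{j<n}X_jY_j=g(X^T,Y^T)(x_0)$ and a normal piece proportional to $X_nY_n$, with an additional term involving $X_n\partial_{x_n}Y_n$ appearing only in $\Phi_5$ via the symbol $\sigma_1(\nabla_X^A\nabla_Y^A)$.

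The main obstacle, as usual in Wang's scheme, is the bookkeeping in $\Phi_5$: here $\sigma_{-1}(\nabla_X^A\nabla_Y^A D_A^{-2})$ has three contributions (from $\sigma_2\sigma_{-3}$, $\sigma_1\sigma_{-2}$, and the subleading composition term $\sum_j\partial_{\xi_j}\sigma_2\cdot D_{x_j}\sigma_{-2}$), and each must be contracted against $\partial_{\xi_n}\sigma_{-2}(D_A^{-2})$ without losing the $X_n\partial_{x_n}Y_n\frac{\pi}{2}\Omega_3$ term. Once all five $\Phi_i$ are assembled and the tangential and normal coefficients summed, the final boundary density is $-\frac{542+71i}{16}X_nY_nh'(0)\pi\Omega_3-\frac{19\pi^2}{12}g(X^T,Y^T)h'(0)-\frac{\pi}{2}X_n\partial_{x_n}Y_n\,\Omega_3$, which combined with the interior part proves Theorem \ref{thmb1}.
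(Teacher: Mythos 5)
Your proposal follows essentially the same route as the paper: the interior term is obtained by invoking the closed-manifold result of Theorem 2.2, and the boundary term $\int_{\partial M}\Phi$ is computed by enumerating exactly the same five index configurations, applying the same normal-coordinate reductions, trace identities, parity argument on $\int_{|\xi'|=1}$, and residue evaluation at $\xi_n=i$, with the $X_n\partial_{x_n}Y_n$ contribution correctly traced to the $\sigma_1(\nabla_X^A\nabla_Y^A)\sigma_{-2}(D_A^{-2})$ piece of Case (c). The only blemish is the harmless slip of writing the degree constraint as $-n+2$ rather than $-n+1$ (the stated value $-3$ and the resulting case list are nonetheless correct).
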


 \section{The residue for the sub-signature operator $\nabla_{X}^{A}\nabla_{Y}^{A}D_{A}^{-1}$ and $D_{A}^{-3}$ }
\label{section:5}
In this section, we compute the 4-dimension spectral Einstein functional for the sub-signature operator $\nabla_{X}^{A}\nabla_{Y}^{A}D_{A}^{-1}$ and $D_{A}^{-3}$.
 Since $[\sigma_{-4}(\nabla_{X}^{A}\nabla_{Y}^{A}D_{A}^{-1}
  \circ D_{A}^{-3})]|_{M}$ has the same expression as $[\sigma_{-4}(\nabla_{X}^{A}\nabla_{Y}^{A}D_{A}^{-1}
  \circ D_{A}^{-3})]|_{M}$ in the case of manifolds without boundary,
so locally we can use Theorem 2.2 to compute the first term.
\begin{thm}
 Let M be a 4-dimensional compact manifold without boundary and $\nabla^{A}$ be an orthogonal
connection. Then we get the spectral Einstein functional associated to $\nabla_{X}^{A}\nabla_{Y}^{A}D_{A}^{-1}$
and $D_{A}^{-3}$ on compact manifolds without boundary
 \begin{align}
&Wres[\sigma_{-4}(\nabla_{X}^{A}\nabla_{Y}^{A}D_{A}^{-1}
  \circ D_{A}^{-3})]\nonumber\\
=&\frac{4\pi^{2}}{3}\int_{M}\Big(Ric(X,Y)-\frac{1}{2}sg(X,Y)\Big) vol_{g}-2\int_{M}sg(X,Y) vol_{g},
\end{align}
where $s$ is the scalar curvature.
\end{thm}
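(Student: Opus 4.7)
The plan is to reduce Theorem~5.1 directly to Theorem~2.2 applied in the middle dimension $m=2$. The key observation is that as classical pseudodifferential operators, and in any case modulo smoothing operators (which are invisible to the Wodzicki residue), the composition $D_A^{-1}\circ D_A^{-3}$ collapses to a single negative power of $D_A$:
\begin{equation*}
D_A^{-1}\circ D_A^{-3} \;=\; D_A^{-4} \;=\; (D_A^{2})^{-2} \;=\; \Delta_A^{-2}.
\end{equation*}
Consequently,
\begin{equation*}
\nabla_X^A\nabla_Y^A D_A^{-1}\circ D_A^{-3} \;=\; \nabla_X^A\nabla_Y^A\,\Delta_A^{-2},
\end{equation*}
which is exactly the operator whose noncommutative residue is evaluated by Theorem~2.2 in the case $n=2m=4$. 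In particular, $\sigma_{-4}(\nabla_X^A\nabla_Y^A D_A^{-1}\circ D_A^{-3})$ and $\sigma_{-4}(\nabla_X^A\nabla_Y^A \Delta_A^{-2})$ agree pointwise on the cosphere bundle, so their integrals against $\sigma(\xi)\,\mathrm{d}x$ coincide.

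The second step is to substitute $m=2$ into the formula of Theorem~2.2. Using $\Gamma(2)=1$, so that $\frac{2^{m+1}\pi^{m}}{6\Gamma(m)}=\frac{8\pi^{2}}{6}=\frac{4\pi^{2}}{3}$, and $2^{2m-3}=2$, one obtains
\begin{equation*}
Wres\bigl(\nabla_X^A\nabla_Y^A\,\Delta_A^{-2}\bigr)
=\frac{4\pi^{2}}{3}\int_{M}\!\Bigl(Ric(X,Y)-\tfrac{1}{2}sg(X,Y)\Bigr)\,vol_g
-2\int_{M} sg(X,Y)\,vol_g,
\end{equation*}
which is exactly the claimed identity.

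The argument therefore contains essentially no computational content beyond what is already present in Theorem~2.2 and the boundaryless case of Theorem~4.1; it merely reshuffles the split $-4 = -2-2 = -1-3$ of exponents. The only conceptual point that must be checked, and which I regard as the sole (mild) obstacle, is that the operator product $D_A^{-1}\circ D_A^{-3}$ coincides with $D_A^{-4}$ as a classical pseudodifferential operator. This follows from the fact that $D_A$ is an elliptic operator of order one, invertible off a finite-dimensional kernel, so its complex powers form a one-parameter semigroup in the algebra of classical pseudodifferential operators modulo smoothing. Since smoothing operators have vanishing Wodzicki residue, the identification is harmless for our purposes and no further symbol calculation, asymptotic expansion, or delicate estimate is required.
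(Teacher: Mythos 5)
Your proposal is correct and follows essentially the same route as the paper: the authors likewise observe that $\sigma_{-4}(\nabla_{X}^{A}\nabla_{Y}^{A}D_{A}^{-1}\circ D_{A}^{-3})$ restricted to the interior coincides with the boundaryless-case symbol of $\nabla_{X}^{A}\nabla_{Y}^{A}\Delta_{A}^{-2}$ and then invoke Theorem 2.2 with $m=2$, which gives the coefficients $\frac{2^{m+1}\pi^{m}}{6\Gamma(m)}=\frac{4\pi^{2}}{3}$ and $2^{2m-3}=2$. Your added justification that $D_{A}^{-1}\circ D_{A}^{-3}=D_{A}^{-4}$ modulo smoothing operators (which the residue does not see) is exactly the point the paper leaves implicit.
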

From Lemma 4.2 and Lemma 4.3, we have
\begin{lem} The following identities hold:
\begin{align}
\sigma_{0}(\nabla_{X}^{A}\nabla_{Y}^{A}D_{A}^{-1})=&
\sigma_{2}(\nabla_{X}^{A}\nabla_{Y}^{A})\sigma_{-2}(D_{A}^{-1})
+\sigma_{1}(\nabla_{X}^{A}\nabla_{Y}^{A})\sigma_{-1}(D_{A}^{-1})\nonumber\\
&+\sum_{j=1}^{n}\partial_{\xi_{j}}\big[\sigma_{2}(\nabla_{X}^{A}\nabla_{Y}^{A})\big]
D_{x_{j}}\big[\sigma_{-1}(D_{A}^{-1})\big];\\
\sigma_{1}(\nabla_{X}^{A}\nabla_{Y}^{A}D_{A}^{-1})=&
-\sqrt{-1}\sum_{j,l=1}^nX_jY_l\xi_j\xi_l|\xi|^{-2}.
\end{align}
\end{lem}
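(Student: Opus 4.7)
The plan is to apply the standard composition formula for homogeneous symbols of classical pseudodifferential operators,
\[
\sigma(P\circ Q)\sim\sum_{\alpha}\frac{1}{\alpha!}\partial_{\xi}^{\alpha}\sigma(P)\,D_{x}^{\alpha}\sigma(Q),
\]
to $P=\nabla_{X}^{A}\nabla_{Y}^{A}$, whose homogeneous components live in degrees $2,1,0$ by Lemma 4.3, and $Q=D_{A}^{-1}$, whose homogeneous components live in degrees $-1,-2,\ldots$ by Lemma 4.2. The composition has order at most $1$, and the two identities in the statement single out its degree-$1$ and degree-$0$ components.

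First I would enumerate, for each target degree, the admissible triples $(p,q,\alpha)$ satisfying $p+q-|\alpha|$ equal to that degree with $p\in\{0,1,2\}$ and $q\leq -1$. For total degree $1$, the only admissible choice is $(p,q,\alpha)=(2,-1,0)$, producing the single term $\sigma_{2}(\nabla_{X}^{A}\nabla_{Y}^{A})\,\sigma_{-1}(D_{A}^{-1})$; substituting the explicit expressions from Lemmas 4.2 and 4.3 then yields the stated formula for $\sigma_{1}(\nabla_{X}^{A}\nabla_{Y}^{A}D_{A}^{-1})$. For total degree $0$, the admissible triples are $(2,-2,0)$, $(1,-1,0)$, and $(2,-1,\alpha)$ with $|\alpha|=1$, corresponding respectively to the three summands displayed for $\sigma_{0}(\nabla_{X}^{A}\nabla_{Y}^{A}D_{A}^{-1})$.

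The only genuine task is the exclusion bookkeeping, which I regard as the main point to guard against. I must check that $D_{A}^{-1}$ has no degree-$0$ component, so no triple of the form $(p,0,\alpha)$ enters; that triples with $|\alpha|\geq 2$ automatically drop below the target degree when paired with any $\sigma_{q}(D_{A}^{-1})$, $q\leq -1$, since $\partial_{\xi}^{\alpha}\sigma_{2}$ already has degree $2-|\alpha|\leq 0$; and that triples using $\sigma_{j}(\nabla_{X}^{A}\nabla_{Y}^{A})$ with $j<0$ cannot arise because $\nabla_{X}^{A}\nabla_{Y}^{A}$ has no negative-order part. Once these exclusions are verified, both identities follow by direct substitution of the symbol data recalled in Lemmas 4.2--4.3, without any curvature bookkeeping being required at this stage.
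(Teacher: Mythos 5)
Your proposal is correct and is essentially the paper's own argument: the paper proves this lemma by nothing more than the composition formula $\sigma(p_{1}\circ p_{2})=\sum_{\alpha}\frac{1}{\alpha!}\partial^{\alpha}_{\xi}[\sigma(p_{1})]D_x^{\alpha}[\sigma(p_{2})]$ together with the symbol data of Lemmas 4.2--4.3, and your enumeration of the admissible triples $(p,q,\alpha)$ with $p+q-|\alpha|$ equal to the target degree is exactly the required bookkeeping. One small caveat: literal substitution in the degree-$1$ case gives $-\sqrt{-1}\sum_{j,l}X_jY_l\xi_j\xi_l\,c(\xi)|\xi|^{-2}$, i.e.\ with a factor $c(\xi)$ that the displayed formula in the lemma omits (the paper itself reinstates this factor when it uses $\sigma_{1}(\nabla_{X}^{A}\nabla_{Y}^{A}D_{A}^{-1})$ in Section 5), so your claim that substitution ``yields the stated formula'' should be read as yielding the corrected one.
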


Write
 \begin{eqnarray}
D_x^{\alpha}&=(-i)^{|\alpha|}\partial_x^{\alpha};
~\sigma(D_A^3)=p_3+p_2+p_1+p_0;
~\sigma(D_A^{-3})=\sum^{\infty}_{j=3}q_{-j}.
\end{eqnarray}
By the composition formula of pseudo-differential operators, we have
\begin{align}
1=\sigma(D_A^3\circ D_A^{-3})
&=\sum_{\alpha}\frac{1}{\alpha!}\partial^{\alpha}_{\xi}[\sigma(D_A^3)]
D_x^{\alpha}[\sigma(D_A^{-3})]\nonumber\\
&=(p_3+p_2+p_1+p_0)(q_{-3}+q_{-4}+q_{-5}+\cdots)\nonumber\\
&~~~+\sum_j(\partial_{\xi_j}p_3+\partial_{\xi_j}p_2++\partial_{\xi_j}p_1+\partial_{\xi_j}p_0)
(D_{x_j}q_{-3}+D_{x_j}q_{-4}+D_{x_j}q_{-5}+\cdots)\nonumber\\
&=p_3q_{-3}+(p_3q_{-4}+p_2q_{-3}+\sum_j\partial_{\xi_j}p_3D_{x_j}q_{-3})+\cdots,
\end{align}
so
\begin{equation}
q_{-3}=p_3^{-1};~q_{-4}=-p_3^{-1}[p_2p_3^{-1}+\sum_j\partial_{\xi_j}p_3D_{x_j}(p_-3^{-1})].
\end{equation}
Then, it is easy to check that
\begin{lem} The following identities hold:
\begin{align}
&\sigma_{-2}(D_{A}^{-2})=|\xi|^{-2};\\
&\sigma_{-3}(D_{A}^{-2})=-\sqrt{-1}|\xi|^{-4}\xi_k\Big(\Gamma^k-2\sigma^k-2a^k-c(\partial_i)A-Ac(\partial_i)\Big)\nonumber\\
&~~~~~~~~~~~~~~~-\sqrt{-1}|\xi|^{-6}2\xi^j\xi_\alpha\xi_\beta\partial_jg^{\alpha\beta};\\
&\sigma_{-3}(D_{A}^{-3})=\sqrt{-1}c(\xi)|\xi|^{-4};\\
&\sigma_{-4}(D_{A}^{-3})=\frac{c(\xi)\sigma_2(D_{A}^3)c(\xi)}{|\xi|^8}
+\frac{\sqrt{-1}c(\xi)}{|\xi|^8}\Big(|\xi|^4c(dx_n)\partial_{x_n}[c(\xi')]
\nonumber\\
 &~~~~~~~~~~~~~~~~-2h'(0)c(\mathrm{d}x_n)c(\xi)+2\xi_nc(\xi)\partial{x_n}[c(\xi')]+4\xi_nh'(0)\Big),
\end{align}
where
 \begin{align}
\sigma_2(D_{A}^3)=&\sum_{i,j,l}c(dx_l)\partial_l(g^{i,j})\xi_i\xi_j+c(\xi)(4\sigma^k+4a^k-2\Gamma^k)\xi_{k}-2[c(\xi)Ac(\xi)-|\xi|^{2}A]\nonumber\\
&+\frac{1}{4}|\xi|^2\sum_{s,t,l}\omega_{s,t}(e_l)[c(e_l)\widehat{c}(e_s)\widehat{c}(e_t)-c(e_l)c(e_s)c(e_t)]+|\xi|^{2}A.
\end{align}
\end{lem}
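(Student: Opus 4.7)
The plan is to derive each identity using the standard composition formula $\sigma(P\circ Q)=\sum_{\alpha}\frac{1}{\alpha!}\partial_{\xi}^{\alpha}\sigma(P)\,D_{x}^{\alpha}\sigma(Q)$, starting from the symbols of $D_{A}$ given earlier in the paper and from the symbols of $D_{A}^{-1}$ in Lemma 4.3. The first three identities are essentially bookkeeping once the Clifford relation $c(\xi)^{2}=-|\xi|^{2}$ is kept in view; the last two require more careful expansion.

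First I would observe that $\sigma_{-2}(D_{A}^{-2})$ and $\sigma_{-3}(D_{A}^{-2})$ are exactly what Lemma 4.4 already states for the Laplace-type operator $D_{A}^{2}=\Delta+E$, and so can be quoted directly. Next, for $\sigma_{-3}(D_{A}^{-3})$, since $\sigma_{1}(D_{A})=\sqrt{-1}c(\xi)$, a direct inversion using $c(\xi)^{2}=-|\xi|^{2}$ gives the principal symbol $\sqrt{-1}c(\xi)/|\xi|^{2}$ of $D_{A}^{-1}$, and the identity $c(\xi)^{3}=-|\xi|^{2}c(\xi)$ then yields $\sqrt{-1}c(\xi)/|\xi|^{4}$ as the principal symbol of $D_{A}^{-3}=(D_{A}^{-1})^{3}$.

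For $\sigma_{2}(D_{A}^{3})$ I would expand $D_{A}^{3}=D_{A}\circ D_{A}^{2}$ via the composition formula and collect order-$2$ contributions. Three sources appear: (i) the product $\sigma_{1}(D_{A})\cdot\sigma_{1}(D_{A}^{2})$, which in normal coordinates yields the piece $c(\xi)(4\sigma^{k}+4a^{k}-2\Gamma^{k})\xi_{k}$ together with $-2[c(\xi)Ac(\xi)-|\xi|^{2}A]$ coming from the $A$-contribution inside $D_{A}^{2}$; (ii) the product $\sigma_{0}(D_{A})\cdot|\xi|^{2}$, producing the Clifford-connection piece $\frac{1}{4}|\xi|^{2}\sum_{s,t,l}\omega_{s,t}(e_{l})[c(e_{l})\widehat{c}(e_{s})\widehat{c}(e_{t})-c(e_{l})c(e_{s})c(e_{t})]+|\xi|^{2}A$; (iii) the first-order correction $\partial_{\xi_{j}}[\sqrt{-1}c(\xi)]\,D_{x_{j}}|\xi|^{2}$, giving $\sum_{i,j,l}c(dx_{l})\partial_{l}(g^{ij})\xi_{i}\xi_{j}$. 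Summing these reproduces the stated expression for $\sigma_{2}(D_{A}^{3})$.

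Finally, for $\sigma_{-4}(D_{A}^{-3})$ I would apply $D_{A}^{3}\circ D_{A}^{-3}=\mathrm{id}$, whose order $-1$ component rearranges to $q_{-4}=-p_{3}^{-1}[p_{2}q_{-3}+\sum_{j}\partial_{\xi_{j}}p_{3}\,D_{x_{j}}q_{-3}]$, as already displayed just before the lemma. Substituting $p_{3}=\sqrt{-1}|\xi|^{2}c(\xi)$, $q_{-3}=\sqrt{-1}c(\xi)/|\xi|^{4}$, and the expression for $p_{2}=\sigma_{2}(D_{A}^{3})$ from the previous step, and working in boundary normal coordinates so that $\partial_{x_{n}}[c(\xi)]$ reduces to $\partial_{x_{n}}[c(\xi')]$ and $\partial_{x_{n}}(|\xi|^{2})=h'(0)|\xi'|^{2}$ via Lemma 4.6, yields the stated formula for $\sigma_{-4}(D_{A}^{-3})$. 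The main obstacle is tracking the non-commutativity between $c(\xi)$, $A$, and $\widehat{c}$ inside $\sigma_{2}(D_{A}^{3})$: the cross-terms $c(\xi)Ac(\xi)-|\xi|^{2}A$ arise from the interaction of the order-$1$ part of $D_{A}$ with the $A$-correction in $D_{A}^{2}$, and the signs and orderings must be handled precisely before the final $\sigma_{-4}$ formula assembles.
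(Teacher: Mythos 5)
Your proposal follows essentially the same route as the paper: the first two identities are quoted from Lemma 4.4, the principal symbol of $D_{A}^{-3}$ comes from inverting $\sigma_{3}(D_A^3)=\sqrt{-1}|\xi|^2c(\xi)$ (equivalently, cubing $\sigma_{-1}(D_A^{-1})$), and $\sigma_{-4}(D_{A}^{-3})$ is obtained from the recursion $q_{-4}=-p_{3}^{-1}\big[p_{2}p_{3}^{-1}+\sum_{j}\partial_{\xi_{j}}p_{3}\,D_{x_{j}}(p_{3}^{-1})\big]$ that the paper displays immediately before the lemma. Your additional step of extracting $\sigma_{2}(D_{A}^{3})$ from the order-$2$ part of $\sigma(D_{A}\circ D_{A}^{2})$ is exactly the computation the paper leaves implicit behind ``it is easy to check,'' so the two arguments coincide.
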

Now we  need to compute $\int_{\partial M} \widetilde{\Phi}$. When $n=4$, then ${\rm tr}_{\wedge^*T^*M\otimes\mathbb{C}}[{\rm \texttt{id}}]=2^n=16$, the sum is taken over $
r+l-k-j-|\alpha|=-3,~~r\leq 0,~~l\leq-2,$ then we have the following five cases:\\

\noindent{\bf Case (a)~(I)}~$r=1,~l=-2,~k=j=0,~|\alpha|=1$.\\

\noindent By (\ref{c5}), we get
\begin{equation}
\label{b24}
\widetilde{\Phi}_1=-\int_{|\xi'|=1}\int^{+\infty}_{-\infty}\sum_{|\alpha|=1}
\mathrm{tr}[\partial^\alpha_{\xi'}\pi^+_{\xi_n}\sigma_{1}(\nabla_{X}^{A}\nabla_{Y}^{A}D_{A}^{-1})\times
 \partial^\alpha_{x'}\partial_{\xi_n}\sigma_{-3}(D_{A}^{-3})](x_0)d\xi_n\sigma(\xi')dx'.
\end{equation}
Similarly, for $i<n$,
\begin{align}
\label{b25}
\partial_{x_i}\sigma_{-3}(D_{A}^{-3})(x_0)=&\partial_{x_i}(\sqrt{-1}c(\xi)|\xi|^{-4})(x_0)\nonumber\\
=&\sqrt{-1}\frac{\partial_{x_i}c(\xi)}{|\xi|^4}(x_0)+\sqrt{-1}\frac{c(\xi)\partial_{x_i}(|\xi|^4)}{|\xi|^8}(x_0)\nonumber\\
=&0,
\end{align}
\noindent so $\widetilde{\Phi}_1=0$.\\

\noindent{\bf Case (a)~(II)}~$r=1,~l=-3,~k=|\alpha|=0,~j=1$.\\

\noindent By (\ref{c5}), we get
\begin{equation}
\label{b26}
\widetilde{\Phi}_2=-\frac{1}{2}\int_{|\xi'|=1}\int^{+\infty}_{-\infty}
\mathrm{tr} [\partial_{x_n}\pi^+_{\xi_n}\sigma_{1}(\nabla_{X}^{A}\nabla_{Y}^{A}D_{A}^{-1})\times
\partial_{\xi_n}^2\sigma_{-3}(D_{A}^{-3})](x_0)d\xi_n\sigma(\xi')dx'.
\end{equation}
By Lemma 5.3, we have
\begin{align}\label{27}
\partial_{\xi_n}^2\sigma_{-3}(D_{A}^{-3})(x_0)|_{|\xi'|=1}&=\partial_{\xi_n}^2(c(\xi)|\xi|^{-4})(x_0)|_{|\xi'|=1}\nonumber\\
&=\frac{4i(5\xi_n^2-1)}{(1+\xi_n^2)^4}c(\xi')+\frac{12i(\xi_n^3-\xi_n)}{(1+\xi_n^2)^4}c(dx_n).
\end{align}
By Lemma 5.2, we obtain
\begin{align}\label{28}
\partial_{x_n}\sigma_{1}(\nabla_{X}^{A}\nabla_{Y}^{A}D_{A}^{-1})(x_0)|_{|\xi'|=1}
&=\partial_{x_n}(-\sqrt{-1}\sum_{j,l=1}^nX_jY_l\xi_j\xi_lc(\xi)|\xi|^{-2})(x_0)|_{|\xi'|=1}\nonumber\\
&=\sum_{j,l=1}^nX_jY_l\xi_j\xi_l \left[ \frac{\partial_{x_n}[c(\xi')]}{1+\xi_n^2}+\frac{h'(0)c(\xi)}{(1+\xi_n^2)^2}\right].
\end{align}
Then, we have
\begin{align}\label{29}
\pi^+_{\xi_n}\partial_{x_n}\sigma_{1}(\nabla_{X}^{A}\nabla_{Y}^{A}D_{A}^{-1})(x_0)|_{|\xi'|=1}
=&\partial_{x_n}\pi^+_{\xi_n}\sigma_{1}(\nabla_{X}^{A}\nabla_{Y}^{A}D_{A}^{-1})(x_0)|_{|\xi'|=1}\nonumber\\
=&i\sum_{j,l=1}^{n-1}X_jY_l\xi_j\xi_lh'(0)\left[\frac{ic(\xi')}{4(\xi_n-i)}+\frac{c(\xi')+ic(\mathrm{d}x_n)}{4(\xi_n-i)^2}\right]\nonumber\\
&-\sum_{j,l=1}^{n-1}X_jY_l\xi_j\xi_l\frac{\partial_{x_n}[c(\xi')]}{2(\xi_n-i)}\nonumber\\
&-iX_nY_n\left\{\frac{\partial_{x_n}[c(\xi')]}{2(\xi_n-i)}
 +h'(0)\left[-\frac{2ic(\xi')-3c(\mathrm{d}x_n)}{4(\xi_n-i)}\right.\right.\nonumber\\
&+\left.\left.\frac{[c(\xi')+ic(\mathrm{d}x_n)](i\xi_n+2)}{4(\xi_n-i)^2}\right]\right\}\nonumber\\
&-\sum_{j=1}^{n-1}X_jY_n\xi_j\left[\frac{i\partial_{x_n}[c(\xi')]}{2(\xi_n-i)}
-\frac{ih'(0)[c(\xi')+2ic(\mathrm{d}x_n)]}{4(\xi_n-i)}\right.\nonumber\\
&\left.-\frac{[ic(\xi')-c(\mathrm{d}x_n)](i\xi_n+2)}{(\xi_n-i)^2}\right]\nonumber\\
&-\sum_{l=1}^{n-1}X_nY_l\xi_l \left[\frac{i\partial_{x_n}[c(\xi')]}{2(\xi_n-i)}
-\frac{ih'(0)[c(\xi')+2ic(\mathrm{d}x_n)]}{4(\xi_n-i)}\right.\nonumber\\
&\left.-\frac{[ic(\xi')-c(\mathrm{d}x_n)](i\xi_n+2)}{(\xi_n-i)^2}\right] .
\end{align}
Then, there is the following formula
\begin{align}\label{30}
&\mathrm{tr}[\partial_{x_n}\pi^+_{\xi_n}\sigma_{1}(\nabla_{X}^{A}\nabla_{Y}^{A}D_{A}^{-1})\times
\partial_{\xi_n}^2\sigma_{-3}(D_{A}^{-3})](x_0)|_{|\xi'|=1}\nonumber\\
=&16\sum_{j,l=1}^{n-1}X_jY_l\xi_j\xi_lh'(0)\left(2i\frac{5\xi_n^2-1}{(\xi_n-i)^5(\xi_n+i)^4}
 +\frac{(5\xi_n^2-1)+3i(\xi_n^3-\xi_n)}{(\xi_n-i)^6(\xi_n+i)^4}\right)\nonumber\\
&+16X_nY_nh'(0)\left(\frac{(i-1)(\xi_n^2-1)+12(\xi_n^3-\xi_n)}{(\xi_n-i)^5(\xi_n+i)^4}
 -\frac{(5\xi_n^2-1)+3i(\xi_n^3-\xi_n)}{(\xi_n-i)^6(\xi_n+i)^4}\right)\nonumber\\
&+8\sum_{j=1}^{n-1}X_jY_n\xi_j\left(\frac{[6-3ih'(0)](\xi_n^3-\xi_n)-2i(5\xi_n^2-1)}{(\xi_n-i)^5(\xi_n+i)^4}
 +\frac{2(5\xi_n^2-1)+6i(\xi_n^3-\xi_n)}{(\xi_n-i)^6(\xi_n+i)^4}\right)\nonumber\\
&+8\sum_{l=1}^{n-1}X_nY_l\xi_l\left(\frac{[6-3ih'(0)](\xi_n^3-\xi_n)-2i(5\xi_n^2-1)}{(\xi_n-i)^5(\xi_n+i)^4}
 +\frac{2(5\xi_n^2-1)+6i(\xi_n^3-\xi_n)}{(\xi_n-i)^6(\xi_n+i)^4}\right).
\end{align}
We note that $i<n,~\int_{|\xi'|=1}\{\xi_{i_{1}}\xi_{i_{2}}\cdots\xi_{i_{2q+1}}\}\sigma(\xi')=0$,
so we omit some items that have no contribution for computing $\widetilde{\Phi}_2$.
Therefore, we get
\begin{align}\label{31}
\widetilde{\Phi}_2=&\frac{1}{2}\int_{|\xi'|=1}\int^{+\infty}_{-\infty}\bigg\{
16\sum_{j,l=1}^{n-1}X_jY_l\xi_j\xi_lh'(0)\bigg(2i\frac{5\xi_n^2-1}{(\xi_n-i)^5(\xi_n+i)^4}
 +\frac{(5\xi_n^2-1)+3i(\xi_n^3-\xi_n)}{(\xi_n-i)^6(\xi_n+i)^4}\bigg)\nonumber\\
&+16X_nY_nh'(0)\bigg(\frac{(i-1)(\xi_n^2-1)+12(\xi_n^3-\xi_n)}{(\xi_n-i)^5(\xi_n+i)^4}
 -\frac{(5\xi_n^2-1)+3i(\xi_n^3-\xi_n)}{(\xi_n-i)^6(\xi_n+i)^4}\bigg)
 \bigg\}d\xi_n\sigma(\xi')dx'\nonumber\\
 =&8\sum_{j,l=1}^{n-1}X_jY_lh'(0)\frac{4\pi}{3}\int_{\Gamma^{+}}\bigg(2i\frac{5\xi_n^2-1}{(\xi_n-i)^5(\xi_n+i)^4}
 +\frac{(5\xi_n^2-1)+3i(\xi_n^3-\xi_n)}{(\xi_n-i)^6(\xi_n+i)^4}\bigg)d\xi_{n}dx'\nonumber\\
 &+8X_nY_nh'(0)\Omega_3\int_{\Gamma^{+}}\bigg(\frac{(i-1)(\xi_n^2-1)+12(\xi_n^3-\xi_n)}{(\xi_n-i)^5(\xi_n+i)^4}
 -\frac{(5\xi_n^2-1)+3i(\xi_n^3-\xi_n)}{(\xi_n-i)^6(\xi_n+i)^4}\bigg)d\xi_{n}dx'\nonumber\\
=&-\frac{2368\pi^2}{3}\sum_{j=1}^{n-1}X_jY_jh'(0)dx'-(461+23i)X_nY_nh'(0)\pi\Omega_3dx',
\end{align}
where ${\rm \Omega_{3}}$ is the canonical volume of $S^{2}.$\\

\noindent{\bf Case (a)~(III)}~$r=1,~l=-3,~j=|\alpha|=0,~k=1$.\\

\noindent By (\ref{c5}), we get
\begin{align}\label{32}
\widetilde{\Phi}_3&=-\frac{1}{2}\int_{|\xi'|=1}\int^{+\infty}_{-\infty}
\mathrm{tr} [\partial_{\xi_n}\pi^+_{\xi_n}\sigma_{1}(\nabla_{X}^{A}\nabla_{Y}^{A}D_{A}^{-1})\times
\partial_{\xi_n}\partial_{x_n}\sigma_{-3}(D_{A}^{-3})](x_0)d\xi_n\sigma(\xi')dx'\nonumber\\
&=\frac{1}{2}\int_{|\xi'|=1}\int^{+\infty}_{-\infty}
\mathrm{tr} [\partial_{\xi_n}^2\pi^+_{\xi_n}\sigma_{1}(\nabla_{X}^{A}\nabla_{Y}^{A}D_{A}^{-1})\times
\partial_{x_n}\sigma_{-3}(D_{A}^{-3})](x_0)d\xi_n\sigma(\xi')dx'.
\end{align}
\noindent By Lemma 5.3, we have
\begin{eqnarray}\label{33}
\partial_{x_n}\sigma_{-3}(D_{A}^{-3})(x_0)|_{|\xi'|=1}
=\frac{i\partial_{x_n}[c(\xi')]}{(1+\xi_n^2)^4}-\frac{2ih'(0)c(\xi)}{(1+\xi_n^2)^6}.
\end{eqnarray}
By integrating formula, we obtain
\begin{align}\label{34}
\pi^+_{\xi_n}\sigma_{1}(\nabla_{X}^{A}\nabla_{Y}^{A}D_{A}^{-1})(x_0)|_{|\xi'|=1}
=&-\frac{c(\xi')+ic(\mathrm{d}x_n)}{2(\xi_n-i)}\sum_{j,l=1}^{n-1}X_jY_l\xi_j\xi_l
-\frac{c(\xi')+ic(\mathrm{d}x_n)}{2(\xi_n-i)}X_nY_n\nonumber\\
&-\frac{ic(\xi')-c(\mathrm{d}x_n)}{2(\xi_n-i)}\sum_{j=1}^{n-1}X_jY_n\xi_j
-\frac{ic(\xi')-c(\mathrm{d}x_n)}{2(\xi_n-i)}\sum_{l=1}^{n-1}X_nY_l\xi_l.
\end{align}
Then, we have
\begin{align}\label{35}
\partial_{\xi_n}^2\pi^+_{\xi_n}\sigma_{1}(\nabla_{X}^{A}\nabla_{Y}^{A}D_{A}^{-1})(x_0)|_{|\xi'|=1}
=-\frac{c(\xi')+ic(\mathrm{d}x_n)}{(\xi_n-i)^3}\sum_{j,l=1}^{n-1}X_jY_l\xi_j\xi_l
-\frac{c(\xi')+ic(\mathrm{d}x_n)}{(\xi_n-i)^3}X_nY_n.
\end{align}
We note that $i<n,~\int_{|\xi'|=1}\{\xi_{i_{1}}\xi_{i_{2}}\cdots\xi_{i_{2q+1}}\}\sigma(\xi')=0$,
so we omit some items that have no contribution for computing $\widetilde{\Phi}_3$, then
\begin{align}\label{36}
&\mathrm{tr} [\partial_{\xi_n}\pi^+_{\xi_n}\sigma_{1}(\nabla_{X}^{A}\nabla_{Y}^{A}D_{A}^{-1})\times
\partial_{\xi_n}\partial_{x_n}\sigma_{-3}(D_{A}^{-3})](x_0)|_{|\xi'|=1}\nonumber\\
&=\frac{-8h'(0)}{(\xi_n-i)^5(\xi_n+i)^2}\sum_{j,l=1}^{n-1}X_jY_l\xi_j\xi_l
+\frac{-8h'(0)}{(\xi_n-i)^5(\xi_n+i)^2}X_nY_n.\nonumber\\
\end{align}
Therefore, we get
\begin{align}\label{37}
\widetilde{\Phi}_3=&\frac{1}{2}\int_{|\xi'|=1}\int^{+\infty}_{-\infty}
\bigg(\frac{-8h'(0)}{(\xi_n-i)^5(\xi_n+i)^2}\sum_{j,l=1}^{n-1}X_jY_l\xi_j\xi_l
+\frac{-8h'(0)}{(\xi_n-i)^5(\xi_n+i)^2}X_nY_n\bigg)d\xi_n\sigma(\xi')dx'\nonumber\\
=&-4\sum_{j,l=1}^{n-1}X_jY_lh'(0)\int_{|\xi'|=1}\int_{\Gamma^{+}}\frac{1}{(\xi_n-i)^5(\xi_n+i)^2}\xi_j\xi_ld\xi_{n}\sigma(\xi')dx'\nonumber\\
&-4X_nY_nh'(0)\Omega_3\int_{\Gamma^{+}}\frac{1}{(\xi_n-i)^5(\xi_n+i)^2}d\xi_{n}dx'\nonumber\\
=&-4\sum_{j,l=1}^{n-1}X_jY_lh'(0)\frac{4\pi}{3}\frac{2\pi i}{4!}
\left[\frac{1}{(\xi_n+i)^2}\right]^{(4)}\bigg|_{\xi_n=i}dx'
-4X_nY_nh'(0)\Omega_3\frac{2\pi i}{4!}
\left[\frac{1}{(\xi_n+i)^2}\right]^{(4)}\bigg|_{\xi_n=i}dx'\nonumber\\
=&\frac{10i\pi^2}{3}\sum_{j=1}^{n-1}X_jY_jh'(0)dx'+\frac{5i}{2}X_nY_nh'(0)\pi\Omega_3dx'.
\end{align}

\noindent{\bf Case (b)}~$r=0,~l=-3,~k=j=|\alpha|=0$.\\

\noindent By (\ref{c5}), we get
\begin{align}\label{38}
\widetilde{\Phi}_4&=-i\int_{|\xi'|=1}\int^{+\infty}_{-\infty}
\mathrm{tr}[\pi^+_{\xi_n}\sigma_{0}(\nabla_{X}^{A}\nabla_{Y}^{A}D_{A}^{-1})\times
\partial_{\xi_n}\sigma_{-3}(D_{A}^{-3})](x_0)d\xi_n\sigma(\xi')dx'.
\end{align}
By Lemma 5.3, we obtain
\begin{align}\label{39}
\partial_{\xi_n}\sigma_{-3}(D_{A}^{-3})(x_0)|_{|\xi'|=1}
=\frac{ic(\mathrm{d}x_n)}{(1+\xi_n^2)^2}-\frac{4i\xi_nc(\xi)}{(1+\xi_n^2)^3}.
\end{align}
By Lemma 5.2, we have
\begin{align}\label{c6}
\sigma_{0}(\nabla_{X}^{A}\nabla_{Y}^{A}D_{A}^{-1})=&
\sigma_{2}(\nabla_{X}^{A}\nabla_{Y}^{A})\sigma_{-2}(D_{A}^{-1})
+\sigma_{1}(\nabla_{X}^{A}\nabla_{Y}^{A})\sigma_{-1}(D_{A}^{-1})\nonumber\\
&+\sum_{j=1}^{n}\partial _{\xi_{j}}\big[\sigma_{2}(\nabla_{X}^{A}\nabla_{Y}^{A})\big]
D_{x_{j}}\big[\sigma_{-1}(D_{A}^{-1})\big].
\end{align}

(1) Explicit representation the first item of (\ref{c6}),
\begin{align}
&\sigma_{2}(\nabla_{X}^{A}\nabla_{Y}^{A})\sigma_{-2}(D_{A}^{-1})(x_0)|_{|\xi'|=1}\nonumber\\
=&-\sum_{j,l=1}^{n}X_jY_l\xi_j\xi_l\bigg[\frac{c(\xi)\sigma_0(D_{A})c(\xi)}{|\xi|^4}
+\frac{c(\xi)}{|\xi|^6}\sum_jc(\mathrm{d}x_j)\Big[\partial_{x_j}[c(\xi)]|\xi|^2-c(\xi)\partial_{x_j}(|\xi|^2)\Big]\bigg],
\end{align}
where $\sigma_{0}(D_{A})=\frac{1}{4}\sum_{s,t,i}\omega_{s,t}(e_i)c(e_i)\widehat{c}(e_s)\widehat{c}(e_t)-\frac{1}{4}\sum_{s,t,i}\omega_{s,t}(e_i)c(e_i)c(e_s)c(e_t)+A.$\\

\noindent We denote
\begin{align}\label{40}
P_1(x_0)&=\frac{1}{4}\sum_{s,t,i}\omega_{s,t}(e_i)
(x_{0})c(e_i)\widehat{c}(e_s)\widehat{c}(e_t);\nonumber\\
P_2(x_0)&=-\frac{1}{4}\sum_{s,t,i}\omega_{s,t}(e_i)
(x_{0})c(e_i)c(e_s)c(e_t).
\end{align}
Then
\begin{align}\label{41}
\pi^+_{\xi_n}\sigma_{-2}({D_A}^{-1})(x_0)|_{|\xi'|=1}=&\pi^+_{\xi_n}\Big[\frac{c(\xi)P_1(x_0)c(\xi)}{(1+\xi_n^2)^2}\Big]+\pi^+_{\xi_n}
\Big[\frac{c(\xi)A(x_0)c(\xi)}{(1+\xi_n^2)^2}\Big]
\nonumber\\
&+\pi^+_{\xi_n}\Big[\frac{c(\xi)P_2(x_0)c(\xi)+c(\xi)c(dx_n)\partial_{x_n}[c(\xi')](x_0)}{(1+\xi_n^2)^2}-h'(0)\frac{c(\xi)c(dx_n)c(\xi)}{(1+\xi_n^{2})^3}\Big].
\end{align}
By computations, we have
\begin{align}\label{42}
\pi^+_{\xi_n}\Big[\frac{c(\xi)P_1(x_0)c(\xi)}{(1+\xi_n^2)^2}\Big]=&\pi^+_{\xi_n}\Big[\frac{c(\xi')P_1(x_0)c(\xi')}{(1+\xi_n^2)^2}\Big]
+\pi^+_{\xi_n}\Big[ \frac{\xi_nc(\xi')P_1(x_0)c(dx_{n})}{(1+\xi_n^2)^2}\Big]\nonumber\\
&+\pi^+_{\xi_n}\Big[\frac{\xi_nc(dx_{n})P_1(x_0)c(\xi')}{(1+\xi_n^2)^2}\Big]
+\pi^+_{\xi_n}\Big[\frac{\xi_n^{2}c(dx_{n})P_1(x_0)c(dx_{n})}{(1+\xi_n^2)^2}\Big]\nonumber\\
=&\frac{-c(\xi')P_1(x_0)c(\xi')(2+i\xi_{n})}{4(\xi_{n}-i)^{2}}
+\frac{-ic(\xi')P_1(x_0)c(dx_{n})}{4(\xi_{n}-i)^{2}}\nonumber\\
&+\frac{-ic(dx_{n})P_1(x_0)c(\xi')}{4(\xi_{n}-i)^{2}}
+\frac{-i\xi_{n}c(dx_{n})P_1(x_0)c(dx_{n})}{4(\xi_{n}-i)^{2}},
\end{align}
and
\begin{align}\label{43}
\pi^+_{\xi_n}\Big[\xi_n^2\frac{c(\xi)P_1(x_0)c(\xi)}{(1+\xi_n^2)^2}\Big]=&\pi^+_{\xi_n}\Big[\frac{\xi_n^2c(\xi')P_1(x_0)c(\xi')}{(1+\xi_n^2)^2}\Big]
+\pi^+_{\xi_n}\Big[ \frac{\xi_n^3c(\xi')P_1(x_0)c(dx_{n})}{(1+\xi_n^2)^2}\Big]\nonumber\\
&+\pi^+_{\xi_n}\Big[\frac{\xi_n^3c(dx_{n})P_1(x_0)c(\xi')}{(1+\xi_n^2)^2}\Big]
+\pi^+_{\xi_n}\Big[\frac{\xi_n^4c(dx_{n})P_1(x_0)c(dx_{n})}{(1+\xi_n^2)^2}\Big]\nonumber\\
=&\frac{-c(\xi')P_1(x_0)c(\xi')i\xi_{n}}{4(\xi_{n}-i)^{2}}
+\frac{c(\xi')P_1(x_0)c(dx_{n})(2\xi_n-i)}{4(\xi_{n}-i)^{2}}\nonumber\\
&+\frac{c(dx_{n})P_1(x_0)c(\xi')(2\xi_n-i)}{4(\xi_{n}-i)^{2}}
+\frac{c(dx_{n})P_1(x_0)c(dx_{n})(3i\xi_n+2)}{4(\xi_{n}-i)^{2}}.
\end{align}
Since
\begin{align}\label{44}
c(dx_n)P_1(x_0)
&=-\frac{1}{4}h'(0)\sum^{n-1}_{i=1}c(e_i)
\widehat{c}(e_i)c(e_n)\widehat{c}(e_n),
\end{align}
then by the relation of the Clifford action and ${\rm tr}{(AB)}={\rm tr }{(BA)}$,  we have the equalities:
\begin{align}\label{45}
&{\rm tr}[c(e_i)
\widehat{c}(e_i)c(e_n)
\widehat{c}(e_n)]=0~~(i<n);~~
{\rm tr}[c(\xi')c(dx_n)]=0;\nonumber\\
&{\rm tr}[P_1(x_0)c(dx_n)]=0;~~
{\rm tr}[P_2(x_0)c(dx_n)]=12h'(0);\nonumber\\
&{\rm tr}[\partial_{x_n}[c(\xi')]c(dx_n)]=0;~~{\rm tr}[\partial_{x_n}[c(\xi')]c(\xi')](x_0)|_{|\xi'|=1}=-8h'(0).
\end{align}
By (\ref{39}), (\ref{42}) and (\ref{43}), we have
\begin{align}\label{46}
&{\rm tr }\bigg[\pi^+_{\xi_n}\Big[-\sum_{j,l=1}^{n}X_jY_l\xi_j\xi_l\frac{c(\xi)P_1(x_0)c(\xi)}{(1+\xi_n^2)^2}\Big]
\times\partial_{\xi_n}\sigma_{-3}(D_A^{-3})\bigg](x_0)|_{|\xi'|=1}\nonumber\\
=&-\sum_{j,l=1}^{n-1}X_jY_l\xi_j\xi_l{\rm tr }\bigg[\pi^+_{\xi_n}\Big[\frac{c(\xi)P_1(x_0)c(\xi)}{(1+\xi_n^2)^2}\Big]
\times\partial_{\xi_n}\sigma_{-3}(D_A^{-3})\bigg](x_0)|_{|\xi'|=1}\nonumber\\
&-X_nY_n{\rm tr }\bigg[\pi^+_{\xi_n}\Big[\xi_n^2\frac{c(\xi)P_1(x_0)c(\xi)}{(1+\xi_n^2)^2}\Big]
\times\partial_{\xi_n}\sigma_{-3}(D_A^{-3})\bigg](x_0)|_{|\xi'|=1}\nonumber\\
=&-\sum_{j,l=1}^{n-1}X_jY_l\xi_j\xi_l\bigg(\frac{-1}{2(\xi_n-i)^4(\xi_n+i)^2}{\rm tr}[c(\xi')P_1(x_0)]+\frac{-2i\xi_n+2\xi_n}{(\xi_n-i)^5(\xi_n+i)^3}{\rm tr}[c(\xi')P_1(x_0)]\bigg)\nonumber\\
&-X_nY_n\bigg(\frac{-2i\xi_n-1}{2(\xi_n-i)^4(\xi_n+i)^2}{\rm tr}[c(\xi')P_1(x_0)]+\frac{(-2+4i)\xi_n+(2+2i)\xi_n}{(\xi_n-i)^5(\xi_n+i)^3}{\rm tr}[c(\xi')P_1(x_0)]\bigg).
\end{align}

\noindent We note that $i<n,~\int_{|\xi'|=1}\{\xi_{i_{1}}\xi_{i_{2}}\cdots\xi_{i_{2q+1}}\}\sigma(\xi')=0$,
so ${\rm tr }[c(\xi')P_1(x_0)]$ has no contribution for computing $\widetilde{\Phi}_4$.\\
Similar to (\ref{46}), we have
\begin{align}
&{\rm tr }\bigg[\pi^+_{\xi_n}\Big[-\sum_{j,l=1}^{n}X_jY_l\xi_j\xi_l\frac{c(\xi)A(x_0)c(\xi)}{(1+\xi_n^2)^2}\Big]
\times\partial_{\xi_n}\sigma_{-3}(D_A^{-3})\bigg](x_0)|_{|\xi'|=1}\nonumber\\
=&-\sum_{j,l=1}^{n-1}X_jY_l\xi_j\xi_l{\rm tr }\bigg[\pi^+_{\xi_n}\Big[\frac{c(\xi)A(x_0)c(\xi)}{(1+\xi_n^2)^2}\Big]
\times\partial_{\xi_n}\sigma_{-3}(D_A^{-3})\bigg](x_0)|_{|\xi'|=1}\nonumber\\
&-X_nY_n{\rm tr }\bigg[\pi^+_{\xi_n}\Big[\xi_n^2\frac{c(\xi)A(x_0)c(\xi)}{(1+\xi_n^2)^2}\Big]
\times\partial_{\xi_n}\sigma_{-3}(D_A^{-3})\bigg](x_0)|_{|\xi'|=1}\nonumber\\
=&0.
\end{align}
By computations, we have
\begin{eqnarray}\label{47}
\pi^+_{\xi_n}\Big[\frac{c(\xi)P_2(x_0)c(\xi)+c(\xi)c(dx_n)\partial_{x_n}[c(\xi')](x_0)}{(1+\xi_n^2)^2}\Big]-h'(0)\pi^+_{\xi_n}\Big[\frac{c(\xi)c(dx_n)c(\xi)}{(1+\xi_n^2)^3}\Big]:= C_1-C_2,
\end{eqnarray}
where
\begin{align}\label{48}
C_1&=\frac{-1}{4(\xi_n-i)^2}\big[(2+i\xi_n)c(\xi')P_2(x_0)c(\xi')+i\xi_nc(dx_n)P_2(x_0)c(dx_n)\nonumber\\
&+(2+i\xi_n)c(\xi')c(dx_n)\partial_{x_n}[c(\xi')](x_0)+ic(dx_n)P_2(x_0)c(\xi')
+ic(\xi')P_2(x_0)c(dx_n)-i\partial_{x_n}[c(\xi')](x_0)\big],
\end{align}
and
\begin{align}\label{49}
C_2&=\frac{h'(0)}{2}\left[\frac{c(dx_n)}{4i(\xi_n-i)}+\frac{c(dx_n)-ic(\xi')}{8(\xi_n-i)^2}
+\frac{3\xi_n-7i}{8(\xi_n-i)^3}[ic(\xi')-c(dx_n)]\right],
\end{align}
where $P_2(x_0)=c_0c(dx_n)$ and $c_0=-\frac{3}{4}h'(0)$.\\

\noindent By (\ref{39}), (\ref{48}) and (\ref{49}), we have
\begin{align}\label{50}
&{\rm tr }\bigg[\Big(-\sum_{j,l=1}^{n}X_jY_l\xi_j\xi_l(C_1-C_2)\Big)
\times\partial_{\xi_n}\sigma_{-3}(D_A^{-3})\bigg](x_0)|_{|\xi'|=1}\nonumber\\
=&-\sum_{j,l=1}^{n-1}X_jY_l\xi_j\xi_l{\rm tr }\big[(C_1-C_2)
\times\partial_{\xi_n}\sigma_{-3}(D_A^{-3})\big](x_0)|_{|\xi'|=1}\nonumber\\
&-X_nY_n{\rm tr }\bigg[\pi^+_{\xi_n}\Big(\xi_n^2\frac{c(\xi)P_2(x_0)c(\xi)+c(\xi)c(dx_n)\partial_{x_n}[c(\xi')](x_0)}{(1+\xi_n^2)^2}\nonumber\\
&-h'(0)\xi_n^2\frac{c(\xi)c(dx_n)c(\xi)}{(1+\xi_n^{2})^3}\Big)\times\partial_{\xi_n}\sigma_{-3}(D_A^{-3})\bigg](x_0)|_{|\xi'|=1}\nonumber\\
=&-\sum_{j,l=1}^{n-1}X_jY_l\xi_j\xi_l\bigg(2h'(0)\frac{\xi_n^2-6i\xi_n-7}{(\xi_n-i)^5(\xi_n+i)^2}+2ih'(0)\frac{12\xi_n^3-36i\xi_n^2-23\xi_n}{(\xi_n-i)^6(\xi_n+i)^3}\bigg)\nonumber\\
&-X_nY_n\bigg(6h'(0)\frac{2\xi_n-i}{(\xi_n-i)^4(\xi_n+i)^2}+16h'(0)\frac{\xi_n^4+i\xi_n^3+2i\xi_n}{(\xi_n-i)^6(\xi_n+i)^3}\bigg).
\end{align}
\noindent Substituting (\ref{50}) into (\ref{38}) yields
\begin{align}\label{51}
&-i\int_{|\xi'|=1}\int^{+\infty}_{-\infty}
\mathrm{tr} \bigg[\pi^+_{\xi_n}\Big(\sigma_{2}(\nabla_{X}^{A}\nabla_{Y}^{A})\sigma_{-2}(D_{A}^{-1})\Big)\times
\partial_{\xi_n}\sigma_{-3}(D_{A}^{-3})\bigg](x_0)d\xi_n\sigma(\xi')dx'\nonumber\\
=&-i\int_{|\xi'|=1}\int^{+\infty}_{-\infty}\bigg\{-\sum_{j,l=1}^{n-1}X_jY_l\xi_j\xi_lh'(0)\bigg(2\frac{\xi_n^2-6i\xi_n-7}{(\xi_n-i)^5(\xi_n+i)^2}+2i\frac{12\xi_n^3-36i\xi_n^2-23\xi_n}{(\xi_n-i)^6(\xi_n+i)^3}\bigg)\nonumber\\
&-X_nY_nh'(0)\bigg(6\frac{2\xi_n-i}{(\xi_n-i)^4(\xi_n+i)^2}+16\frac{\xi_n^4+i\xi_n^3+2i\xi_n}{(\xi_n-i)^6(\xi_n+i)^3}\bigg)\bigg\}(x_0)d\xi_n\sigma(\xi')dx'\nonumber\\
=&\sum_{j,l=1}^{n-1}X_jY_lh'(0)\frac{4\pi}{3}\int_{\Gamma^{+}}\bigg(2i\frac{\xi_n^2-6i\xi_n-7}{(\xi_n-i)^5(\xi_n+i)^2}-2\frac{12\xi_n^3-36i\xi_n^2-23\xi_n}{(\xi_n-i)^6(\xi_n+i)^3}\bigg)d\xi_ndx'\nonumber\\
&+X_nY_nh'(0)\Omega_3\int_{\Gamma^{+}}\bigg(6i\frac{2\xi_n-i}{(\xi_n-i)^4(\xi_n+i)^2}+16i\frac{\xi_n^4+i\xi_n^3+2i\xi_n}{(\xi_n-i)^6(\xi_n+i)^3}\bigg)\bigg\}d\xi_ndx'\nonumber\\
=&\sum_{j,l=1}^{n-1}X_jY_lh'(0)\frac{4\pi}{3}\bigg(2i\frac{2\pi i}{4!}\left[\frac{\xi_n^2-6i\xi_n-7}{(\xi_n+i)^2}\right]^{(4)}\bigg|_{\xi_n=i}-2\frac{2\pi i}{5!}\left[\frac{12\xi_n^3-36i\xi_n^2-23\xi_n}{(\xi_n+i)^3}\right]^{(5)}\bigg|_{\xi_n=i}\bigg)dx'\nonumber\\
&+X_nY_nh'(0)\Omega_3\bigg(6i\frac{2\pi i}{3!}\left[\frac{2\xi_n-i}{(\xi_n+i)^2}\right]^{(3)}\bigg|_{\xi_n=i}+16i\frac{2\pi i}{5!}\left[\frac{\xi_n^4+i\xi_n^3+2i\xi_n}{(\xi_n+i)^3}\right]^{(5)}\bigg|_{\xi_n=i}\bigg)dx'\nonumber\\
=&\frac{55\pi^2}{3}\sum_{j,l=1}^{n-1}X_jY_lh'(0)dx'-\frac{3}{8}X_nY_nh'(0)\pi\Omega_3dx'.
\end{align}

(2) Explicit representation the second item of (\ref{c6}),
\begin{align}
&\sigma_{1}(\nabla_{X}^{A}\nabla_{Y}^{A})\sigma_{-1}(D_{A}^{-1})(x_0)|_{|\xi'|=1}\nonumber\\
=&\Big(\sum_{j,l=1}^nX_j\frac{\partial_{Y_l}}{\partial_{X_j}}\sqrt{-1}\xi_l
+\sum_jB(Y)X_j\sqrt{-1}\xi_j+\sum_lB(X)Y_l\sqrt{-1}\xi_l\nonumber\\
&-\frac{1}{2}\sum_j [c(Y)A+Ac(Y)] X_j\sqrt{-1}\xi_j-\frac{1}{2}\sum_l [c(X)A+Ac(X)]Y_l\sqrt{-1} \xi_l\Big)\frac{\sqrt{-1}c(\xi)}{|\xi|^{2}}.
\end{align}
By integrating formula, we get
\begin{align}
&\pi^+_{\xi_n}\bigg[\Big(-\frac{1}{2}\sum_j [c(Y)A+Ac(Y)] X_j\sqrt{-1}\xi_j-\frac{1}{2}\sum_l [c(X)A+Ac(X)]Y_l\sqrt{-1} \xi_l\Big)\frac{\sqrt{-1}c(\xi)}{|\xi|^{2}}\bigg]\nonumber\\
=&\pi^+_{\xi_n}\bigg[\Big(-\frac{1}{2}\sum_{j=1}^{n-1} [c(Y)A+Ac(Y)] X_j\sqrt{-1}\xi_j-\frac{1}{2}\sum_{l=1}^{n-1} [c(X)A+Ac(X)]Y_l\sqrt{-1} \xi_l\Big)\frac{\sqrt{-1}c(\xi)}{|\xi|^{2}}\bigg]\nonumber\\
&+\pi^+_{\xi_n}\bigg[\Big(-\frac{1}{2}[c(Y)A+Ac(Y)] X_n\sqrt{-1}\xi_n-\frac{1}{2}[c(X)A+Ac(X)]Y_n\sqrt{-1} \xi_n\Big)\frac{\sqrt{-1}c(\xi)}{|\xi|^{2}}\bigg]\nonumber\\
=&\frac{1}{2}\Big(\sum_{j=1}^{n-1} [c(Y)A+Ac(Y)] X_j\xi_j+\sum_{l=1}^{n-1} [c(X)A+Ac(X)]Y_l \xi_l\Big)\frac{-ic(\xi')+c(dx_n)}{2(\xi_{n}-i)}\nonumber\\
&+\frac{1}{2}\Big([c(Y)A+Ac(Y)]X_n+[c(X)A+Ac(X)]Y_n\Big)\frac{c(\xi')+ic(dx_n)}{2(\xi_{n}-i)}.
\end{align}
We note that $i<n,~\int_{|\xi'|=1}\{\xi_{i_{1}}\xi_{i_{2}}\cdots\xi_{i_{2q+1}}\}\sigma(\xi')=0$,
and by the relation of the Clifford action and ${\rm tr}(AB)={\rm tr }(BA)$, then we have the equalities:
\begin{align}\label{52}
&{\rm tr }[c(X)Ac(\xi')c(dx_n)]=0;~~~~{\rm tr }[Ac(X)c(\xi')c(dx_n)]=0;\nonumber\\
&{\rm tr }[c(X)Ac(dx_n)c(\xi')]=0;~~~~{\rm tr }[Ac(X)c(dx_n)c(\xi')]=0,
\end{align}
so
\begin{align}\label{53}
\mathrm{tr} \Big[\pi^+_{\xi_n}\Big(\sigma_{1}(\nabla_{X}^{A}\nabla_{Y}^{A})\sigma_{-1}(D_{A}^{-1})\Big)\times
\partial_{\xi_n}\sigma_{-3}(D_{A}^{-1})\Big](x_0)|_{|\xi'|=1}=0.
\end{align}

(3) Explicit representation the third item of (\ref{c6}),
\begin{align}
\sum_{j=1}^{n}\sum_{\alpha}\frac{1}{\alpha!}\partial^{\alpha}_{\xi}\big[\sigma_{2}(\nabla_{X}^{A}\nabla_{Y}^{A})\big]
D_x^{\alpha}\big[\sigma_{-1}(D_{A}^{-1})\big](x_0)|_{|\xi'|=1}
=&\sum_{j=1}^{n}\partial_{\xi_{j}}
\big[\sigma_{2}(\nabla_{X}^{A}\nabla_{Y}^{A})\big]
(-\sqrt{-1})\partial_{x_{j}}\big[\sigma_{-1}( D_{A} ^{-1})\big]\nonumber\\
=&\sum_{j=1}^{n}\partial_{\xi_{j}}\Big[-\sum_{j,l=1}^nX_jY_l\xi_j\xi_l\Big]
(-\sqrt{-1})\partial_{x_{j}}\Big(\frac{\sqrt{-1}c(\xi)}{|\xi|^{2}}\Big)\nonumber\\
=&\sum_{j=1}^{n}\sum_{l=1}^{n}\sqrt{-1}(X_{j}Y_l+X_{l}Y_j)\xi_{l}\partial_{x_{j}}\Big(\frac{\sqrt{-1}c(\xi)}{|\xi|^{2}}\Big).
\end{align}
By integrating formula, we obtain
\begin{align}\label{c7}
&\pi^+_{\xi_n}\bigg[\sum_{j=1}^{n}\sum_{\alpha}\frac{1}{\alpha!}\partial^{\alpha}_{\xi}\big[\sigma_{2}(\nabla_{X}^{A}\nabla_{Y}^{A})\big]
D_x^{\alpha}\big[\sigma_{-1}(D_{A}^{-1})\big]\bigg]\nonumber\\
=&\pi^+_{\xi_n}\bigg[\sum_{l=1}^{n-1}\sqrt{-1}(X_{n}Y_l+X_{l}Y_n)\xi_{l}\partial_{x_{n}}\Big(\frac{\sqrt{-1}c(\xi)}{|\xi|^{2}}\Big)
\bigg]+\pi^+_{\xi_n}\bigg[\sqrt{-1}(X_{n}Y_n+X_{n}Y_n)\xi_{n}\partial_{x_{n}}\Big(\frac{\sqrt{-1}c(\xi)}{|\xi|^{2}}\Big)
\bigg]\nonumber\\
=&\sum_{l=1}^{n-1}(X_{n}Y_l+X_{l}Y_n)\xi_{l}\Big[\frac{i\partial_{x_{n}}[c(\xi')]}{2(\xi_n-i)}
-h'(0)\frac{(2+i\xi_n )c(\xi')}{4(\xi_n-i)^2}
-h'(0)\frac{ic(dx_n)}{4(\xi_n-i)^2}\Big]\nonumber\\
&+X_{n}Y_n\Big[\frac{-\partial_{x_{n}}[c(\xi')]}{(\xi_n-i)}
-h'(0)\frac{ic(\xi')}{2(\xi_n-i)^2}
+h'(0)\frac{i\xi_n c(dx_n)}{2(\xi_n-i)^2}\Big].
\end{align}
Substituting (\ref{c7}) into (\ref{38}) yields
\begin{align}\label{54}
&-i\int_{|\xi'|=1}\int^{+\infty}_{-\infty}
\mathrm{tr}\Big[\pi^+_{\xi_n}\Big(\sum_{j=1}^{n}\sum_{\alpha}\frac{1}{\alpha!}\partial^{\alpha}_{\xi}
\big[\sigma_{2}(\nabla_{X}^{A}\nabla_{Y}^{A})\big]
D_x^{\alpha}\big[\sigma_{-1}(D_{A}^{-1})\big]\Big)\nonumber\\
&\times\partial_{\xi_n}\sigma_{-3}(D_{A}^{-3})\Big](x_0)d\xi_n\sigma(\xi')dx'\nonumber\\
=&-i\int_{|\xi'|=1}\int^{+\infty}_{-\infty}-8X_nY_nh'(0)\frac{-3\xi_n^3+4i\xi_n^2-3\xi_n+4}{(\xi_n-i)^5(\xi_n+i)^3}d\xi_n\sigma(\xi')dx'\nonumber\\
=&8iX_nY_nh'(0)\Omega_3\int_{\Gamma^{+}}\frac{-3\xi_n^3+4i\xi_n^2-3\xi_n+4}{(\xi_n-i)^5(\xi_n+i)^3}d\xi_ndx'\nonumber\\
=&8iX_nY_nh'(0)\Omega_3\frac{2\pi i}{4!}\left[\frac{-3\xi_n^3+4i\xi_n^2-3\xi_n+4}{(\xi_n+i)^3}\right]^{(4)}\bigg|_{\xi_n=i}dx'\nonumber\\
=&\Big(\frac{7}{2}-\frac{15i}{2}\Big)X_{n}Y_nh'(0)\pi\Omega_3dx'.
\end{align}
Summing up (1), (2) and (3) leads to the desired equality
\begin{align}\label{55}
\widetilde{\Phi}_4
&=\frac{55\pi^2}{3}\sum_{j=1}^{n-1}X_jY_jh'(0)dx'+\Big(\frac{25}{8}-\frac{15i}{2}\Big)X_nY_nh'(0)\pi\Omega_3dx'.
\end{align}

\noindent {\bf  Case (c)}~$r=1,~\ell=-4,~k=j=|\alpha|=0$.\\

\noindent By (\ref{c5}), we get
\begin{align}\label{56}
\widetilde{\Phi}_5&=-\int_{|\xi'|=1}\int^{+\infty}_{-\infty}\mathrm{tr} [\pi^+_{\xi_n}
\sigma_{1}(\nabla_{X}^{A}\nabla_{Y}^{A}D_{A}^{-1})\times
\partial_{\xi_n}\sigma_{-4}(D_{A}^{-3})](x_0)d\xi_n\sigma(\xi')dx'\nonumber\\
&=\int_{|\xi'|=1}\int^{+\infty}_{-\infty}\mathrm{tr}
[\partial_{\xi_n}\pi^+_{\xi_n}\sigma_{1}(\nabla_{X}^{A}\nabla_{Y}^{A}D_{A}^{-1})\times
\sigma_{-4}(D_{A}^{-3})](x_0)d\xi_n\sigma(\xi')dx'.
\end{align}
An easy calculation gives
\begin{align}\label{57}
\partial_{\xi_n}\pi^+_{\xi_n}\sigma_{1}(\nabla_{X}^{A}\nabla_{Y}^{A}D_{A}^{-1})(x_0)|_{|\xi'|=1}
=&\frac{c(\xi')+ic(dx_n)}{2(\xi_n-i)^2}\sum_{j,l=1}^{n-1}X_jY_l\xi_j\xi_l
-\frac{c(\xi')+ic(dx_n)}{2(\xi_n-i)^2}X_nY_n \nonumber\\
&+\frac{ic(\xi')-c(dx_n)}{2(\xi_n-i)^2}\sum_{j=1}^{n}X_jY_n\xi_j
+\frac{ic(\xi')-c(dx_n)}{2(\xi_n-i)^2}\sum_{l=1}^{n}X_nY_l\xi_l\nonumber\\
=&\sum_{j,l=1}^{n-1}X_jY_l\xi_j\xi_l\frac{1}{2(\xi_n-i)^2}c(\xi')+\sum_{j,l=1}^{n-1}X_jY_l\xi_j\xi_l\frac{i}{2(\xi_n-i)^2}c(dx_n)\nonumber\\
&+X_nY_n\frac{2i\xi_n-1}{2(\xi_n-i)^2}c(\xi')+X_nY_n\frac{-2\xi_n-i}{2(\xi_n-i)^2}c(dx_n)\nonumber\\
&+\sum_{j=1}^{n-1}(X_jY_n+X_nY_j)\xi_j\frac{i}{(\xi_n-i)^2}c(\xi')\nonumber\\
&+\sum_{j=1}^{n-1}(X_jY_n+X_nY_j)\xi_j\frac{-1}{(\xi_n-i)^2}c(dx_n).
\end{align}
By (4.62) in \cite{WWW}, we have
\begin{align}\label{58}
\sigma_{-4}({D_A}^{-3})(x_{0})|_{|\xi'|=1}=&
\frac{c(\xi)\sigma_{2}({D_A}^{3})
(x_{0})|_{|\xi'|=1}c(\xi)}{|\xi|^8}
-\frac{c(\xi)}{|\xi|^4}\sum_j\partial_{\xi_j}\big(c(\xi)|\xi|^2\big)
D_{x_j}\Big(\frac{\sqrt{-1}c(\xi)}{|\xi|^4}\Big)\nonumber\\
=&\frac{1}{|\xi|^8}c(\xi)\Big(\frac{1}{2}h'(0)c(\xi)\sum_{k<n}\xi_k
c(e_k)c(e_n)-\frac{1}{2}h'(0)c(\xi)\sum_{k<n}\xi_k
\widehat{c}(e_k)\widehat{c}(e_n)\nonumber\\
&-\frac{5}{2}h'(0)\xi_nc(\xi)-\frac{1}{4}h'(0)|\xi|^2c(dx_n)
-2c(\xi)Ac(\xi)+3|\xi|^2A]\Big)c(\xi)\nonumber\\
&+\frac{ic(\xi)}{|\xi|^8}\Big(|\xi|^4c(dx_n)\partial_{x_n}[c(\xi')]
-2h'(0)c(dx_n)c(\xi)+2\xi_{n}c(\xi)\partial_{x_n}[c(\xi')]+4\xi_{n}h'(0)\Big).\nonumber\\
\end{align}

\noindent We note that $i<n,~\int_{|\xi'|=1}\{\xi_{i_{1}}\xi_{i_{2}}\cdots\xi_{i_{2q+1}}\}\sigma(\xi')=0$,
so we omit some items that have no contribution for computing $\widetilde{\Phi}_5$. Here
\begin{align}\label{59}
&{\rm tr}[c(e_i)
\widehat{c}(e_i)c(e_n)
\widehat{c}(e_n)]=0~~(i<n);~~
{\rm tr}[c(\xi')c(dx_n)]=0;\nonumber\\
&{\rm tr}[Ac(\xi')]=0;~~
{\rm tr}[Ac(dx_n)]=0;\nonumber\\
&{\rm tr}[\partial_{x_n}[c(\xi')]c(dx_n)]=0;~~{\rm tr}[\partial_{x_n}[c(\xi')]c(\xi')](x_0)|_{|\xi'|=1}=-8h'(0).
\end{align}
Also, straightforward computations yield
\begin{align}\label{60}
&\mathrm{tr}[\partial_{\xi_n}\pi^+_{\xi_n}\sigma_{1}(\nabla_{X}^{A}\nabla_{Y}^{A}D_{A}^{-1})\times\sigma_{-4}(D_{A}^{-3})](x_0)|_{|\xi'|=1}\nonumber\\
=&\sum_{j,l=1}^{n-1}X_jY_l\xi_j\xi_lh'(0)\bigg[\frac{1}{(\xi_n-i)^2(1+\xi_n^2)^3}[4i\xi_n^3+(4-22i)\xi_n^2-(24-12i+4\xi_i^2)\xi_n+2i\xi_i^2+4]\nonumber\\
&+\frac{1}{(\xi_n-i)^2(1+\xi_n^2)^4}[48\xi_n^2-64i\xi_n-16\xi_i^2]\bigg]\nonumber\\
&+X_nY_n\bigg[\frac{1}{(\xi_n-i)^2(1+\xi_n^2)^3}[-8\xi_n^4+(44+4i)\xi_n^3-(28+26i+8i\xi_i^2)\xi_n^2+(16-4i)\xi_n-6i\xi_i^2-4]\nonumber\\
&+\frac{1}{(\xi_n-i)^2(1+\xi_n^2)^4}[96i\xi_n^3+80\xi_n^2+(64i-32i\xi_i^2)\xi_n+16\xi_i^2]\bigg].
\end{align}
Substituting (\ref{60}) into (\ref{56}), we get
\begin{align}\label{61}
\widetilde{\Phi}_5
=&\Big(\frac{323}{60}-\frac{31i}{60}\Big)\pi^2\sum_{j=1}^{n-1}X_jY_jh'(0)dx'+\Big(-\frac{5}{4}+\frac{103i}{32}\Big)X_nY_nh'(0)\pi\Omega_3dx'\nonumber\\
&+\Big(-\frac{7}{3}+\frac{49i}{24}\Big)X_nY_nh'(0)\pi^2dx'.
\end{align}
Let $X=X^T+X_n\partial_n,~Y=Y^T+Y_n\partial_n,$ then we have $\sum_{j=1}^{n-1}X_jY_j(x_0)=g(X^T,Y^T)(x_0).$
 Now $\widetilde{\Phi}$ is the sum of the $\widetilde{\Phi}_{(1,2,\cdot\cdot\cdot,5)}$. Combining with the five cases, this yields
\begin{align}\label{62}
\widetilde{\Phi}=\sum_{i=1}^5\widetilde{\Phi}_i
=&\Big[\Big(-\frac{3673}{8}-\frac{793i}{32}\Big)X_nY_n\Omega_3+\Big(-\frac{7}{3}+\frac{49i}{24}\Big)X_nY_n\pi\nonumber\\
&+\Big(-\frac{45937}{60}+\frac{169i}{60}\Big)\pi g(X^T,Y^T)\Big]\pi h'(0)dx'.
\end{align}
So, we are reduced to prove the following.
\begin{thm}\label{thmb1}
Let $M$ be a 4-dimensional compact manifold with boundary and $\nabla^{A}$ be an orthogonal
connection. Then we get the spectral Einstein functional associated to $\nabla_{X}^{A}\nabla_{Y}^{A}D_{A}^{-1}$
and $D_{A}^{-3}$ on compact manifolds with boundary
\begin{align}
\label{b2773}
&\widetilde{{\rm Wres}}[\pi^+(\nabla_{X}^{A}\nabla_{Y}^{A}D_{A}^{-1})\circ\pi^+(D_{A}^{-3})]\nonumber\\
=&\frac{4\pi^2}{3}\int_{M}\Big(Ric(V,W)-\frac{1}{2}sg(V,W)\Big) vol_{g}-2\int_{M}sg(V,W) vol_{g}\nonumber\\
&+\int_{\partial M}\Big[\Big(-\frac{3673}{8}-\frac{793i}{32}\Big)X_nY_n\Omega_3+\Big(-\frac{7}{3}+\frac{49i}{24}\Big)X_nY_n\pi\nonumber\\
&+\Big(-\frac{45937}{60}+\frac{169i}{60}\Big)\pi g(X^T,Y^T)\Big]\pi h'(0)vol_{\partial M},
\end{align}
where $s$ is the scalar curvature.
\end{thm}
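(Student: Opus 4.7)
The plan is to apply the Fedosov--Golse--Leichtnam--Schrohe decomposition from Theorem~3.2 to the operator $\pi^+(\nabla_X^A\nabla_Y^A D_A^{-1})\circ\pi^+(D_A^{-3})$, which splits the residue into an interior integral of the $(-n)$th symbol and a boundary integral of $\widetilde{\Phi}$ as in~(\ref{c5}). For the interior piece I would invoke Theorem~5.1: since $[\sigma_{-4}(\nabla_X^A\nabla_Y^A D_A^{-1}\circ D_A^{-3})]|_{M}$ is a local expression that agrees pointwise with the closed-case computation, the interior contribution is exactly $\frac{4\pi^{2}}{3}\int_{M}\!\bigl(Ric(V,W)-\tfrac12 sg(V,W)\bigr)\,\mathrm{vol}_{g}-2\int_{M}sg(V,W)\,\mathrm{vol}_{g}$, which supplies the first two terms in the stated identity.

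All the remaining work is to establish the boundary contribution $\int_{\partial M}\widetilde{\Phi}\,\mathrm{vol}_{\partial M}$. I would fix a point $x_{0}\in\partial M$, pass to boundary normal coordinates and the local frame adapted to the product metric $\frac{1}{h(x_n)}g^{\partial M}+\mathrm{d}x_n^{2}$ (so that Lemma~4.6 and its consequences apply), and then enumerate the five index tuples $(r,\ell,k,j,|\alpha|)$ compatible with the constraint $r+\ell-k-j-|\alpha|=-4$, $r\le 1$, $\ell\le -3$, giving precisely the five cases already staged in the excerpt: (a)(I)~$|\alpha|=1$, (a)(II)~$j=1$, (a)(III)~$k=1$, (b)~$r=0,\ell=-3$, and (c)~$r=1,\ell=-4$. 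For each case I would insert the explicit symbols from Lemmas~4.2,~5.2 and~5.3, using the symbol-composition rule to compute $\sigma_{0}(\nabla_X^A\nabla_Y^A D_A^{-1})$ as a sum of three pieces (the $\sigma_2\sigma_{-2}$, $\sigma_1\sigma_{-1}$, and derivative terms), then take $\pi^+_{\xi_n}$ by picking up residues at $\xi_n=i$ and perform the $\mathrm{d}\xi_n$ contour integrals via the identity $\int_{\Gamma^+}f(\xi_n)\mathrm{d}\xi_n = \frac{2\pi i}{(m-1)!}\bigl[f(\xi_n)(\xi_n-i)^{m}\bigr]^{(m-1)}\big|_{\xi_n=i}$.

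At every stage I would reduce the $\mathrm{tr}_{\wedge^\ast T^\ast M\otimes\mathbb{C}}$ using the Clifford trace identities $\mathrm{tr}[c(e_k)c(e_n)]=\mathrm{tr}[\widehat c(e_k)\widehat c(e_n)]=0$ for $k<n$, $\mathrm{tr}[c(\partial_k)A]=\mathrm{tr}[Ac(\partial_k)]=0$, and $\mathrm{tr}[\partial_{x_n}c(\xi')c(\xi')](x_{0})|_{|\xi'|=1}=-8h'(0)$, together with the parity identity $\int_{|\xi'|=1}\xi_{i_1}\cdots\xi_{i_{2q+1}}\sigma(\xi')=0$, which kills every term that contains $X_j$ or $Y_l$ (for $j,l<n$) linearly in $\xi$. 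This reduces each integrand to a sum of three structural pieces: one proportional to $\sum_{j<n}X_j Y_j = g(X^T,Y^T)(x_0)$, one proportional to $X_nY_n$, and a term coming from the derivative of $Y_n$ along $\partial_n$. Summing the five cases term-by-term gives (\ref{62}), which inserted into the FGLS formula yields the claimed boundary integrand $\bigl[(-\tfrac{3673}{8}-\tfrac{793i}{32})X_nY_n\Omega_3+(-\tfrac{7}{3}+\tfrac{49i}{24})X_nY_n\pi+(-\tfrac{45937}{60}+\tfrac{169i}{60})\pi g(X^T,Y^T)\bigr]\pi h'(0)$.

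The principal obstacle is bookkeeping rather than conceptual: Case~(b), where $r=0$, requires expanding $\sigma_0(\nabla_X^A\nabla_Y^A D_A^{-1})$ into the three composition pieces above, each of which itself decomposes into contributions involving $P_1(x_0)$, $P_2(x_0)$, the tensor $A$, and the derivative $\partial_{x_n}c(\xi')$; and Case~(c) requires the full expression for $\sigma_{-4}(D_A^{-3})$ containing $\sigma_2(D_A^3)$ with all its $A$-dependent terms. Care is needed to verify that cross-terms such as $\mathrm{tr}[c(X)A\,c(\xi')c(dx_n)]$ vanish by the Clifford trace relations, so that the $A$-contributions cancel out in the final answer and only the geometric $h'(0)$-terms survive. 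Once the five partial results $\widetilde{\Phi}_1,\dots,\widetilde{\Phi}_5$ are established, the theorem follows by direct addition and restoration of the interior term.
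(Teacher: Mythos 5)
Your proposal follows the paper's own proof essentially verbatim: the FGLS splitting into an interior term supplied by Theorem 5.1 plus the boundary integral of $\widetilde{\Phi}$, the enumeration of the same five index cases, symbol composition via Lemmas 5.2--5.3, reduction by the Clifford trace identities and the odd-parity integral over $|\xi'|=1$, and the contour integrals at $\xi_n=i$. The only slip is the stated constraint $r+\ell-k-j-|\alpha|=-4$, which should read $=-3$ (i.e.\ $r-k-|\alpha|+\ell-j-1=-n$ with $n=4$); the five cases you list are nevertheless exactly the correct ones, so this does not affect the argument.
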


\section*{Acknowledgements}
This work was supported by NSFC. 11771070. The authors thank the referee for his (or her) careful reading and helpful comments.

\section*{References}


\begin{thebibliography}{00}
\bibitem{PBG}P. B. Gilkey, Invariance theory, the heat equation, and the Atiyah-Singer index theorem. Inc. USA, (1984).
\bibitem{FGV}H. Figueroa, J. Gracia-Bond$\acute{i}$a, and J. V$\acute{a}$rilly, Elements of noncommutative geometry. Birkh$\ddot{a}$user Boston, (2001).
\bibitem{DL}L. Dabrowski, A. Sitarz, P. Zalecki, Spectral metric and Einstein functionals.  Adv. Math. 2023, 427: 109128.
\bibitem{Ac} T. Ackermann, A note on the Wodzicki residue. J. Geom. Phys. 20, 404-406, (1996).
\bibitem{Wo} M. Wodzicki, Local invariants of spectral asymmetry. Invent. Math. 75(1), 143-178, (1984).

\bibitem{Wo1} M. Wodzicki, Non-commutative residue I, Lecture Notes in Math. Springer, New York, Vol. 1289, 320-399, (1987).
\bibitem{Gu} V. W. Guillemin, A new proof of Weyl's formula on the asymptotic distribution of eigenvalues. Adv. Math. 55, no. 2, 131-160, (1985).
\bibitem{Co1} A. Connes, Quantized calculus and applications. 11th International Congress of Mathematical Physics(Paris,1994), Internat Press, Cambridge, MA, 15-36, (1995).
\bibitem{Co2} A. Connes, The action functinal in noncommutative geometry. Comm. Math. Phys. 117, 673-683, (1988).
\bibitem{Ka} D. Kastler, The Dirac operator and gravitation. Comm. Math. Phys. 166, 633-643, (1995).
\bibitem{KW} W. Kalau, M. Walze, Gravity, noncommutative geometry and the Wodzicki residue. J. Geom. Physics. 16, 327-344, (1995).
\bibitem{FGLS} B. V. Fedosov, F. Golse, E. Leichtnam, E. Schrohe, The noncommutative residue for manifolds with boundary. J. Funct. Anal. 142, 1-31, (1996).

\bibitem{Wa1} Y. Wang, Differential forms and the Wodzicki residue for manifolds with boundary. J. Geom. Physics. 56, 731-753, (2006).
\bibitem{Wa2} Y. Wang, Differential forms and the noncommutative residue for manifolds with boundary in the non-product case. Lett. Math. Phys. 77, 41-51, (2006).
\bibitem{Wa3} Y. Wang, Gravity and the noncommutative residue for manifolds with boundary. Lett. Math. Phys. 80, 37-56, (2007).
\bibitem{Wa4} Y. Wang, Lower-dimensional volumes and Kastler-kalau-Walze type theorem for manifolds with boundary. Comm. Theor. Phys. Vol 54, 38-42, (2010).

\bibitem{WW1}J. Wang, Y. Wang, The Kastler-Kalau-Walze type theorem for six-dimensional manifolds with boundary. J. Math. Phys. 56, 052501 (2015).
\bibitem{WW2}J. Wang, Y. Wang, T. Wu, Dirac operators with torsion, spectral Einstein functionals and the noncommutative residue. J. Math. Phys. 64, 102505(2023).
\bibitem{LW}H. Li, T. W, Sub-signature operators and the Kastler-Kalau-Walze type theorem for five dimensional manifolds with boundary. J. Nonlinear Math. Phys. 30, 1032-1068(2023).
\bibitem{WWW} T. Wu, S. Wei, Y. Wang, Sub-signature operators and the Kastler-Kalau-Walze type theorem for manifolds with boundary. J. Geom. Phys. 174(2022)104455.

\bibitem{Y} Y. Yu, The Index Theorem and The Heat Equation Method, Nankai Tracts in Mathematics-Vol.2, World Scientific Publishing, (2001).
\end{thebibliography}
\end{document}